\documentclass[11pt,a4paper]{article}
\usepackage{amsmath,amssymb,amsthm,amsfonts,latexsym,graphicx,subfigure}
\usepackage[all,import]{xy}
\usepackage[numbers,sort&compress]{natbib}

\usepackage{indentfirst}
\usepackage{fancyhdr,enumerate}
\usepackage{bbm,color}
\usepackage{tikz}%mathpazo
\usetikzlibrary{decorations.pathreplacing,calc}
\usetikzlibrary{shapes.geometric, arrows}
\usepackage{accents,cases}
\usepackage[T1]{fontenc} % 确保支持扩展拉丁字符
\usepackage{lmodern} % 可选，提供更好的字体支持
\usepackage{multirow}%\usepackage{eps2pdf}
\usepackage[lined, ruled, linesnumbered, longend]{algorithm2e}
\usepackage{tikz}
\usetikzlibrary{shapes.geometric, arrows}
\usepackage[labelsep=space]{caption}

\usepackage{array,float}
\newcommand{\PreserveBackslash}[1]{\let\temp=\\#1\let\\=\temp}
\newcolumntype{C}[1]{>{\PreserveBackslash\centering}p{#1}}
\newcolumntype{R}[1]{>{\PreserveBackslash\raggedleft}p{#1}}
\newcolumntype{L}[1]{>{\PreserveBackslash\raggedright}p{#1}}

\setlength{\textheight}{256mm} \setlength{\textwidth}{168mm}
\setlength{\oddsidemargin}{-5mm} \setlength{\evensidemargin}{0mm}
\setlength{\topmargin}{-23mm}

\makeatletter
\def\wbar{\accentset{{\cc@style\underline{\mskip8mu}}}}

\makeatother

\def\d{\mathrm{d}}

 %added by xj
 %added by xj
%\renewcommand{\vec}[1]{\mbox{\boldmath$#1$}}

% added by xj

%\newcommand{\T}{\ensuremath{\mathscr{T}}}
%\newcommand{\p}{\ensuremath{\mathcal{P}}}

%\newcommand{\M}{\ensuremath{\mathcal{M}}}

%\newcommand{\A}{\ensuremath{\mathcal{A}}}

%\DeclareMathOperator*{\cone}{\ensuremath{cone}}
%\DeclareMathOperator*{\cen}{\ensuremath{center}}

%\DeclareMathOperator*{\vol}{\ensuremath{vol}}

%\newcommand{\span}{\ensuremath{\mathrm{span}}}

%\numberwithin{equation}{section}

%\newcommand{\B}{\ensuremath{\mathcal{B}}}

\theoremstyle{plain}
\newtheorem{theorem}{Theorem}

\newtheorem{defn}{Definition}[section]
\newtheorem{Conj}{Conjecture}[section]
\newtheorem{lemma}{Lemma}%[section]

\newtheorem{pro}{Proposition}[section]

\allowdisplaybreaks

\begin{document}

\title{Equiangular lines via nodal domains}
\author{Chuanyuan Ge\footnotemark[1]\and Shiping Liu\footnotemark[2] }

\footnotetext[1]{School of Mathematics and Statistics, Fuzhou University, Fuzhou 350108, China.\\
Email addresses:
{\tt gechuanyuan@mail.ustc.edu.cn} }
\footnotetext[2]{School of Mathematical Sciences, 
University of Science and Technology of China, Hefei 230026, China. \\
Email addresses:
{\tt spliu@ustc.edu.cn}
}
%\footnotetext[3]{Max Planck Institute for Mathematics in the Sciences, Inselstrasse 22, 04103 Leipzig, Germany. \\
%Email addresses:
%{\tt dzhang@mis.mpg.de} \; and \; {\tt 13699289001@163.com}
%}
\date{}\maketitle
\begin{abstract}
For given $\Delta>0$ and $0<\lambda<3/\sqrt{2}$, we show that the maximum multiplicity that $\lambda$ can appear as the second largest eigenvalue of a connected graph with maximum degree at most $\Delta$ is $O_{\Delta,\lambda}(1)$. This result answers a question due to Jiang, Tidor, Yao, Zhang and Zhao [Question 6.4, Ann. of Math. (2) 194 (2021), no. 3, 729-743] in the case of $0<\lambda<3/\sqrt{2}$, and consequently leads to improvements in their results on equiangular lines. Our proof is based on the concept of nodal domains of eigenfunctions. Indeed, we establish a multiplicity estimate in terms of maximum degree and cyclomatic number of the graph, via a novel construction of eigenfunctions with large number of nodal domains.

\iffalse
A collection of lines that traverse the origin within $\mathbb{R}^d$ such that the angle between any two lines is the same is called equiangular lines. For $\alpha \in(0,1)$, let $N_{\alpha}(d)$ denote the maximum number of equiangular lines with angle $\arccos\alpha$ in $\mathbb{R}^d$. In 1973, Lemmens and Seidel posed the problem of determining $N_{\alpha}(d)$, which has now become an important research topic within algebraic graph theory. In this paper, under condition $\alpha\geq \alpha_0>\frac{1}{3\sqrt{2}+1}$, we prove that $N_{\alpha}(d)=\left\lfloor\frac{(d-1)\kappa}{\kappa-1}\right\rfloor$ for $d\geq C_{\alpha_0}\kappa$ if $\kappa<\infty$ and $N_{\alpha}(d)=d+O_{\alpha_0}(1)$ if $\kappa=\infty$, where $\kappa$ is the spectral radius order of $\frac{1-\alpha}{2\alpha}$ and $C_{\alpha_0}$ is a constant dependent only on $\alpha_0$. This improves the result of Jiang, Tidor, Yao, Zhang, and Zhao [Ann. of Math. (2) 194 (2021), no. 3, 729-743] for both cases $\kappa<\infty$ and $\kappa=\infty$ when $\alpha\geq \alpha_0>\frac{1}{3\sqrt{2}+1}$.

Our results hinge on a multiplicity estimation result, which provides an optimal answer to the problem posed by Jiang et al. under the condition $\lambda<\frac{3}{\sqrt{2}}$. Our approach relies on the concept of nodal domains, which originates in the continuous world.

%Our results hinge on an improved upper bound for the multiplicity of a graph's second largest eigenvalue, which states that for any graph with second largest eigenvalue bounded by $\lambda_0<\frac{3}{\sqrt{2}}$, the multiplicity of its second largest eigenvalue are bounded by a constant depend only on $\lambda_0$ and its maximum degree. Our approach relies on the concept of nodal domains, which originates in the continuous world.
\fi
\end{abstract}

\section{Introduction}
A collection of lines that traverse the origin within $\mathbb{R}^d$ is called equiangular lines if the angle between any two of them is constant. Equiangular lines, along with their various variants, are closely related to various research topics, see \cite{Balla-Draxler-Keevash-Sudakov-18,Jiang-Tidor-Yao-Zhang-Zhao-21,balla2024equiangular-exponential-regime} and the references therein.
%areas such as elliptic geometry \cite{Haantjes-elliptic,Blumenthal-distance-geometry,van-Seidel-elliptic-geometry,Haantjes-Seidel-elliptic-plane}, frame theory \cite{Strohmer-Heath-03}, quantum information theory \cite{Renes-Blume-quantum-measurements04}, Banach spaces \cite{Basso-19,Derpolhk-Lewandowska23,Foucart-Skrzypek-17}, Hilbert’s twelfth problem \cite{Appleby-Bengtsson-Grassl-22,Appleby-Flammia-McConnell-Yard-17,Appleby-Flammia-McConnell-Yard-20,Bengtsson-17}.

An interesting problem proposed by van Lint and Seidel \cite{van-Seidel-elliptic-geometry} is to determine the maximum number $N(d)$ of equiangular lines in $\mathbb{R}^d$. This has become one of the founding problems of algebraic graph theory. An upper bound $ N(d)\leq \binom{d+1}{2}$ was shown by Gerzon, see \cite{Lemmens-Seidel-73}. On the other hand, de Caen \cite{de-00} showed that $ N(d) \geq \frac{2}{9}(d+1)^2$ for $d=6\cdot4^i-1$ by constructing explicit collections of equiangular lines. 
We refer to \cite{Barg-Yu-14,Greaves-Koolen-Munemasa-16,Jedwab-Wiebe-15} for other constructions.

For $\alpha\in (0,1)$, let $N_{\alpha}(d)$ be the maximum number of equiangular lines in $\mathbb{R}^d$ with fixed angle $\arccos\alpha$. In 1973, Lemmens and Seidel \cite{Lemmens-Seidel-73} posed the problem of determining $N_{\alpha}(d)$. In fact, they proved that $N_{\frac{1}{3}}(d)=2d-2$ for $d\geq 15$. Furthermore, they conjectured  \cite[Conjecture 5.8]{Lemmens-Seidel-73} that $N_{1/5}(d)=\left\lfloor  \frac{3(d-1)}{2} \right\rfloor$ for $d\geq 185$, and $N_{1/5}(d)=276$ for $23\leq d\leq 185$. Neumaier \cite{Neumaier-89} proved in 1989 that $N_{1/5}(d)$ equals $\left\lfloor \frac{3(d-1)}{2} \right\rfloor$ for sufficiently large $d$. In 2022, Cao, Koolen, Lin, and Yu \cite{Cao-Koolen-Lin-Yu} confirmed this conjecture completely.

For general $\alpha$, Neumaier showed that $N_{\alpha}(d)\leq 2d$ unless $\frac{1}{\alpha}$ is an odd integer, see \cite{Lemmens-Seidel-73}. 
In 2016, Bukh \cite{Bukh-16} proved that $N_{\alpha}(d)\leq c_{\alpha}d$, where $c_{\alpha}=2^{O(1/\alpha^2)}$. Later, Balla, Dr\"axler, Keevash, and Sudakov \cite{Balla-Draxler-Keevash-Sudakov-18} made a breakthrough on this problem by proving $\limsup_{d\to \infty }N_{\alpha}(d)/d\leq 1.93$ for any $\alpha\neq \frac{1}{3}$. Following the result in \cite{Lemmens-Seidel-73} and \cite{Neumaier-89}, Bukh \cite{Bukh-16} conjectured that $N_{1/(2k-1)}(d) = kd/(k-1)+O_k(1)$ as $d$ tends to $\infty$. To solve this conjecture, Jiang and Polyanskii \cite{Jiang-Polyanskii-20} introduced the following spectral graph theoretic quantity. 

\begin{defn}[Spectral radius order]
    Given any $\lambda\in \mathbb{R}$, the spectral radius order $\kappa(\lambda)$ is defined as follows:$$\kappa(\lambda):=\min\{n\in Z:\text{ there exists a graph $G=(V,E)$ such that $\lambda_1(G)=\lambda$ and $|V|=n$ }\},$$
    where $\lambda_1(G)$ is the largest eigenvalue of the adjacency matrix of $G$. If there does not exist any graph $G$ such that $\lambda_1(G)=\lambda$, then we set $\kappa(\lambda)=\infty$.
\end{defn}
Jiang and Polyanskii \cite{Jiang-Polyanskii-20}
proposed the following conjecture
\[\lim_{d\to \infty}\frac{N_{\alpha}(d)}{d}=\frac{\kappa(\lambda)}{\kappa(\lambda)-1},\,\,\text{ with}\,\,\lambda=\frac{1-\alpha}{2\alpha},\] and confirmed it for the case $\lambda<\sqrt{2+\sqrt{5}}$. 
Since $\kappa(m)=m+1$ for any $m\in Z^+$, Jiang and Polyanskii's conjecture is a stronger version of Bukh's conjecture.
In 2021, Jiang, Tidor, Yao, Zhang, and Zhao completely solved Jiang and Polyanskii's conjecture by proving the following theorem \cite[Theorem 1.2]{Jiang-Tidor-Yao-Zhang-Zhao-21}.
\begin{theorem}\label{thm:k-lambad}
    Let $\alpha\in (0,1)$. Define $\lambda=\frac{1-\alpha}{2\alpha}$ and
$\kappa=\kappa(\lambda)$ as its spectral radius order. 
\begin{itemize}
    \item [(a)] If $\kappa<\infty$, then $N_{\alpha}(d)=\left\lfloor \frac{(d-1)\kappa}{\kappa-1}\right\rfloor$ for $d>2^{2^{C\kappa/\alpha}}$, where $C$ is an absolute constant. 
    \item  [(b)] If $\kappa=\infty$, then $N_{\alpha}(d)=d+o_{\alpha}(d)$ as $d\to \infty$.
\end{itemize}
\end{theorem}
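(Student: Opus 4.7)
The plan is to reduce the geometric question to a multiplicity question for adjacency eigenvalues, along the framework of Jiang, Tidor, Yao, Zhang and Zhao. Given $N$ equiangular lines with common angle $\arccos\alpha$ in $\mathbb{R}^d$, I would choose unit directions $v_1,\dots,v_N$ so that $\langle v_i,v_j\rangle\in\{\pm\alpha\}$ for $i\neq j$. The Gram matrix $I+\alpha S$, where $S$ has zero diagonal and $S_{ij}=\mathrm{sign}\langle v_i,v_j\rangle$ off-diagonal, is positive semidefinite of rank at most $d$, so $-1/\alpha$ is an eigenvalue of $S$ of multiplicity at least $N-d$. A Seidel switching lets one write $S=J-I-2A$ with $A$ the adjacency matrix of a graph $H$ on $[N]$, and a $(-1/\alpha)$-eigenvector of $S$ orthogonal to $\mathbf{1}$ is precisely a $\lambda$-eigenvector of $A$ with $\lambda=(1-\alpha)/(2\alpha)$. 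This yields $\mathrm{mult}_A(\lambda)\geq N-d-1$, reducing the problem to controlling how often $\lambda$ occurs as an eigenvalue of $A$.

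I would then decompose $H$ into connected components $C_1,\dots,C_k$ and classify them by the relation between $\lambda_1(C_i)$ and $\lambda$. Components with $\lambda_1(C_i)<\lambda$ contribute nothing to $\mathrm{mult}_A(\lambda)$. Components with $\lambda_1(C_i)=\lambda$ contribute exactly $1$ by Perron--Frobenius, and by the very definition of $\kappa(\lambda)$ each has at least $\kappa$ vertices, so their total count is at most $N/\kappa$. The remaining components, those with $\lambda_1(C_i)>\lambda$, have $\lambda$ strictly below their spectral radius, and a separate multiplicity estimate is required. A preparatory Seidel-switching cleanup reduces to the case where the maximum degree $\Delta$ of $H$ is bounded purely in terms of $\alpha$ --- otherwise a single $v_i$ would be close to too many other unit vectors inside a narrow cone, contradicting the angle constraint.

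On each such high-$\lambda_1$ component one applies an estimate of the form $\mathrm{mult}_{C}(\lambda)=o(|C|)$, valid for any connected graph $C$ of maximum degree $\Delta$ in which $\lambda<\lambda_1(C)$. Summing over components gives
\begin{equation*}
N-d-1\;\leq\;\mathrm{mult}_A(\lambda)\;\leq\;\frac{N}{\kappa}+o(N),
\end{equation*}
which rearranges to $N\leq\frac{\kappa}{\kappa-1}(d+1)+o(N)$. For part (a), pushing through quantitative versions of the error terms and exploiting the integrality of $N$ sharpens this to $N\leq\lfloor(d-1)\kappa/(\kappa-1)\rfloor$ for $d\geq 2^{2^{C\kappa/\alpha}}$, with a matching lower bound supplied by a suitable disjoint union of blow-ups of an extremal graph attaining $\kappa(\lambda)$. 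For part (b), $\kappa=\infty$ kills the first term on the right, leaving $N\leq d+1+o(N)$, hence $N_\alpha(d)=d+o_\alpha(d)$.

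The principal obstacle is the multiplicity estimate on the high-$\lambda_1$ components: for a connected bounded-degree graph the multiplicity of $\lambda_2$ can in principle be as large as $\Omega(n/\log n)$, so even the sublinear bound $o(n)$ requires genuine structural input. The original proof of JTYZZ achieves $O(n/\log\log n)$ via a moment/polynomial-trace argument exploiting the bounded-degree hypothesis; the improvement carried out in the present paper replaces this with a nodal-domain construction that delivers the optimal $O_{\Delta,\lambda}(1)$ whenever $\lambda<3/\sqrt{2}$. Secondary obstacles are verifying that the bounded-degree reduction is genuinely available through Seidel switching, and organising the error terms into the exact floor expression in part (a).
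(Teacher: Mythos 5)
Your plan is the standard framework of Jiang--Tidor--Yao--Zhang--Zhao, which is also exactly the skeleton this paper re-runs in Section \ref{sec:equiangular} (via Lemma \ref{lemma:equi-and-graph} and Proposition \ref{pro:lower bound}) to prove its own Theorem \ref{thm:main}: pass by switching to a bounded-degree graph $G$ with $\lambda I-A_G+\frac{1}{2}J$ positive semidefinite of rank at most $d$, decompose into components, and trade $\dim\ker(\lambda I-A_G)$ against $N-d$. But two steps are genuinely gapped. First, your component estimate ``$\mathrm{mult}_C(\lambda)=o(|C|)$ for any connected bounded-degree $C$ with $\lambda<\lambda_1(C)$'' is false in that generality: an eigenvalue deep in the spectrum can have linear multiplicity in a connected bounded-degree graph (e.g.\ $-2$ in the line graph of a connected cubic graph, a $4$-regular-degree-bounded graph where $-2$ has multiplicity about half the number of vertices). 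What saves the argument is the rank-one structure you never invoke: since $\frac{1}{2}J$ has rank one and $\lambda I-A_G+\frac{1}{2}J\succeq 0$, the matrix $\lambda I-A_G$ has at most one negative eigenvalue, so $\lambda_2(G)\leq\lambda$; hence at most one component has $\lambda_1>\lambda$, such a component cannot coexist with one whose top eigenvalue equals $\lambda$ (this is the $f_1-\gamma f_2$ computation in Case 3 of the paper's Section \ref{sec:equiangular}), and in that single component $\lambda$ is the \emph{second largest} eigenvalue, which is precisely where the sublinear JTYZZ bound (or Theorem \ref{thm:multi} here) applies. Without this, your summation of $o(|C_i|)$ over many components is both unjustified and unnecessary. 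Relatedly, the bounded-degree reduction is not a soft ``narrow cone'' observation; it is a genuine switching lemma (the content of Lemma \ref{lemma:equi-and-graph}), though you do flag it.

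Second, for part (a) your aggregated inequality $N\leq\frac{\kappa}{\kappa-1}(d+1)+o(N)$ cannot be sharpened to $N\leq\lfloor(d-1)\kappa/(\kappa-1)\rfloor$ by integrality alone: the slack is already more than an additive $2\kappa/(\kappa-1)>2$ before any error terms. The exact constant needs the cases kept disjoint, as in Cases 1--3 of Section \ref{sec:equiangular}. When $\lambda=\lambda_1(G)$ one must use that each component attaining $\lambda$ carries a nonnegative Perron vector not orthogonal to $\mathbf{1}$, so $\mathrm{rank}(\lambda I-A_G)\leq\mathrm{rank}(\lambda I-A_G+\frac{1}{2}J)-1\leq d-1$, giving $N\leq d-1+j$ with $j\leq(d-1)/(\kappa-1)$ and no error term at all; when $\lambda<\lambda_1(G)$ one needs the multiplicity of $\lambda$ (as $\lambda_2$ of the unique big component) to be smaller than roughly $(d-1)/(\kappa-1)$, and it is exactly this comparison that produces the threshold $d>2^{2^{C\kappa/\alpha}}$ from the $O(n\log\Delta/\log\log n)$ bound (resp.\ $d>C_{\lambda_0}\kappa$ from Theorem \ref{thm:multi} in this paper). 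Finally, the matching lower bound of Proposition \ref{pro:lower bound} uses disjoint copies of a $\kappa$-vertex graph with spectral radius exactly $\lambda$; ``blow-ups'' would change the spectral radius and do not work.
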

There are two directions to improve the above theorem. One is to lower down the bound for $d$ in $(a)$, the other is to improve the bound for $N_\alpha(d)$ in $(b)$. In the first direction,   Balla \cite{balla2021equiangular} improved the lower bound condition to $d>2^{2^{C_0\kappa\log(\frac{1}{\alpha})}}$, where $C_0$ is an absolute constant. Balla and Buci{\'c} \cite{balla2024equiangular-exponential-regime} showed that $N_{\alpha}(d)\leq\left\lfloor(d-1)(1+\frac{2\alpha}{1-\alpha})\right\rfloor$ for $d\geq 2^{\frac{1}{\alpha^{20}}}$. 
 Since $\kappa(\lambda)\geq \lambda+1$ for any $\lambda\in (0,+\infty)$, we have $\left\lfloor\frac{(d-1)\kappa}{\kappa-1}\right\rfloor\leq \left\lfloor(d-1)(1+\frac{2\alpha}{1-\alpha})\right\rfloor$. In the second direction, 
 Jiang et al. proposed the following conjecture \cite[Conjecture 6.1]{Jiang-Tidor-Yao-Zhang-Zhao-21} and confirmed it for any $\lambda$ that is not a totally real algebraic integer.
 \begin{Conj}\label{conj:infty}
     Let $\alpha$, $\lambda$, and $\kappa$ be defined as in Theorem \ref{thm:k-lambad}. If $\kappa=\infty$, then $N_{\alpha}(d)=d+O_{\alpha}(1)$.
 \end{Conj}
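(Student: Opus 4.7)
The plan is to derive the conjecture from the spectral multiplicity statement flagged in the abstract: for a connected graph $G$ with maximum degree at most $\Delta$ and $\lambda \in (0, 3/\sqrt{2})$, the multiplicity of $\lambda$ as the second largest eigenvalue $\lambda_2(G)$ is $O_{\Delta,\lambda}(1)$. Given such a bound, the reduction of \cite{Jiang-Tidor-Yao-Zhang-Zhao-21} from the equiangular lines problem to a second-eigenvalue multiplicity problem runs without the hypothesis that $\lambda$ fails to be a totally real algebraic integer, upgrading their $N_\alpha(d) = d + o_\alpha(d)$ to $N_\alpha(d) = d + O_\alpha(1)$ for $\kappa = \infty$ in the regime $\lambda < 3/\sqrt{2}$.

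The multiplicity bound itself is attacked through a two-sided estimate on the number of nodal domains of eigenfunctions in the $\lambda$-eigenspace. On the upper side, a discrete Courant-type nodal domain theorem (in the style of Davies, Gladwell, Leydold, and Stadler) bounds the number of strong nodal domains of any eigenfunction associated with $\lambda_k(G)$ by $k + \beta(G) - 1$, where $\beta(G) = |E(G)| - |V(G)| + 1$ is the cyclomatic number. If $\lambda = \lambda_2(G)$ has multiplicity $m$, every eigenfunction in the $\lambda$-eigenspace corresponds to an index between $2$ and $m+1$, so its number of strong nodal domains is at most $m + \beta(G)$.

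On the lower side, inside the $m$-dimensional $\lambda$-eigenspace I would construct an eigenfunction with many nodal domains. The scheme is to pick a maximally separated set of vertices, whose size is controlled by $m$ and the degree bound $\Delta$, and to solve a codimension-$O(m)$ linear system forcing the eigenfunction to change sign on the neighborhoods of these vertices while remaining small elsewhere. The condition $\lambda < 3/\sqrt{2}$ enters through a local propagation lemma exploiting the eigenvalue equation $\lambda f(v) = \sum_{u \sim v} f(u)$: it forces a $\lambda$-eigenfunction to decay away from short-range local supports, so that the separated bumps produce genuinely independent nodal domains. The output is a lower bound of the form $\Omega(m/C(\Delta,\lambda))$ on the number of nodal domains of some eigenfunction in the eigenspace.

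The main obstacle is the cyclomatic term $\beta(G)$ in the upper bound, which a priori allows $m \leq C(\Delta,\lambda)(1 + \beta(G))$ to be vacuous for graphs with many independent cycles. The removal of this dependence is the delicate step: if $m$ is large compared with $C(\Delta,\lambda)$, I would argue that $G$ admits a controlled reduction — for example, iteratively cutting cycle-generating edges while preserving, up to a bounded additive loss via eigenvalue interlacing, the $\lambda$-eigenspace. Each such reduction decreases $\beta(G)$, and after $O_{\Delta,\lambda}(1)$ iterations the combined inequality becomes genuinely restrictive, forcing $m = O_{\Delta,\lambda}(1)$. The numerical threshold $3/\sqrt{2}$ should govern how much of the $\lambda$-eigenspace persists under each reduction, and its role in securing this persistence is where I expect the technical heart of the argument to lie.
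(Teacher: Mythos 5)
Two preliminary remarks: the conjecture as literally stated (all $\alpha$ with $\kappa=\infty$) is false --- Schildkraut's counterexamples are cited in the paper --- so only the restricted version $\lambda\le\lambda_0<3/\sqrt{2}$ is provable; you do implicitly restrict to that regime, and your first step (feeding a multiplicity bound for $\lambda_2$ into the Jiang--Tidor--Yao--Zhang--Zhao reduction to get $N_\alpha(d)=d+O_{\lambda_0}(1)$) is exactly what the paper does in Section \ref{sec:equiangular}. The problems are in your proof of the multiplicity bound itself. Your upper bound on strong nodal domains is the wrong tool: the Davies--Gladwell--Leydold--Stadler theorem bounds the strong nodal count of an eigenfunction of $\lambda_k$ by $k+r-1$ where $r$ is the \emph{multiplicity}, not by $k+\beta(G)-1$; with $r=m$ (or even with your stated $m+\beta$) the upper bound already contains $m$, so comparing it with a lower bound of order $m/C(\Delta,\lambda)$ yields a vacuous inequality and the scheme never closes. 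What is needed is an upper bound independent of the multiplicity, namely $\mathfrak{S}_G(f)\le (k-1)\Delta_G$ (Theorem \ref{thm:nodal upper}), paired with a construction of a \emph{single} eigenfunction in the $m$-dimensional eigenspace having at least $m-\ell_G$ strong nodal domains. The paper gets the latter not by your bump-and-decay scheme (there is no decay statement for $\lambda<3/\sqrt{2}$, and ``remaining small elsewhere'' is not a linear condition you can impose inside an eigenspace) but by a sign-flipping construction: take a basis $\{f_i\}$ dual to $m$ vertices, order those vertices by distance to a root in a spanning tree, and add $\pm f_j$ iteratively so that each designated vertex has non-positive product with the value at its tree-parent; this forces at least $m$ nodal domains on the spanning tree, hence at least $m-\ell_G$ on $G$, giving $m\le (k-1)\Delta_G+\ell_G$ (Lemma \ref{lemma:multi tree}).

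The second genuine gap is your treatment of the cyclomatic term. ``Iteratively cutting cycle-generating edges while preserving, up to bounded loss via interlacing, the $\lambda$-eigenspace'' is not an argument: Cauchy interlacing (Theorem \ref{thm:interlacing}) applies to principal submatrices, i.e.\ vertex deletion, and deleting edges neither produces a principal submatrix nor preserves eigenspaces in any controlled way. The role of the threshold $3/\sqrt{2}$ is also not the one you assign it. The actual mechanism is a spectral-radius obstruction: $\lambda_1(\mathcal{T}(\ell,\ell,\ell))\nearrow 3/\sqrt{2}$ (Proposition \ref{pro:T}), and every connected graph with cyclomatic number at least $2$ contains a theta, dumbbell or figure-eight graph, each of spectral radius at least $3/\sqrt{2}$ (Proposition \ref{pro:two-cycle}). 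One deletes a ball of radius $n_\lambda+2$ around a shortest cycle (or around a vertex whose $n_\lambda$-ball contains $\mathcal{T}(n_\lambda,n_\lambda,n_\lambda)$); since that ball supports spectral radius exceeding $\lambda$, the edge-disjointness lemma of Balla--Buci\'c (Lemma \ref{lemma:edge disjoint}) forces every component of the complement to have $\lambda_1<\lambda<3/\sqrt{2}$, hence cyclomatic number at most $1$, so Lemma \ref{lemma:multi tree} applies with $\ell\le 1$ on each component, and interlacing over the deletion of the boundedly many ball vertices assembles the final bound. Without this (or some substitute for your edge-cutting step and for the unproved ``propagation lemma''), your outline does not yield $m=O_{\Delta,\lambda}(1)$, and hence does not reach the equiangular-lines conclusion.
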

However, Schildkraut \cite{schildkraut-2023equiangular} constructed counterexamples to show that the above conjecture fails for infinitely many $\lambda$. It remains open to figure out the set of $\lambda$ for which Conjecture \ref{conj:infty} holds.

In this article, we improve Theorem \ref{thm:k-lambad} in the two directions described above for the case $\lambda<\frac{3}{\sqrt{2}}$. 
% improve the lower bound of $d$ in $(a)$ to $C\kappa$  under some condition of $\lambda$. For the second problem, we prove that Conjecture \ref{conj:infty} holds for the case $\lambda<\frac{3}{\sqrt{2}}$. To be clear, we prove the following theorem.

 \begin{theorem}\label{thm:main}
       Let $\alpha\in (0,1)$. Define $\lambda=\frac{1-\alpha}{2\alpha}$ and
$\kappa=\kappa(\lambda)$ as its spectral radius order.
       \begin{itemize}
           \item [(a)] If $\kappa<\infty$ and $\lambda\leq \lambda_0<\frac{3}{\sqrt{2}}$, then $N_{\alpha}(d)=\left\lfloor \frac{(d-1)\kappa}{\kappa-1}\right\rfloor$ for $d>C_{\lambda_0}\kappa$, where $C_{\lambda_0}$ is a constant depending only on $\lambda_0$. 
           \item [(b)] If $\kappa=\infty$ and $\lambda\leq \lambda_0<\frac{3}{\sqrt{2}}$, then $N_{\alpha}(d)=d+O_{\lambda_0}(1)$.
       \end{itemize}      
 \end{theorem}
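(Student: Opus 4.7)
The plan is to insert the new multiplicity estimate announced in the abstract into the reduction framework of Jiang, Tidor, Yao, Zhang, Zhao \cite{Jiang-Tidor-Yao-Zhang-Zhao-21}, as refined by Balla \cite{balla2021equiangular} and Balla--Buci\'c \cite{balla2024equiangular-exponential-regime}. Any family of $n$ equiangular lines of common angle $\arccos\alpha$ in $\R^d$ has a Gram matrix of the form $(1-\alpha)I+\alpha J-2\alpha A(G)$ for some graph $G$ after sign-switching. Since this Gram matrix has rank at most $d$, the multiplicity of $\lambda=(1-\alpha)/(2\alpha)$ as an eigenvalue of $A(G)$ controls the excess $n-d$. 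Ramsey-type pruning, as in JTYZZ, caps the maximum degree of the relevant connected components of $G$ by a function of $\lambda_0$; JTYZZ and Balla then bound the multiplicity of $\lambda$ through coarse arguments, producing the doubly-exponential threshold in Theorem \ref{thm:k-lambad}(a) and the $o_\alpha(d)$ error in (b). Inputting the sharper estimate ``multiplicity $=O_{\Delta,\lambda_0}(1)$ when $\lambda_0<3/\sqrt{2}$'' in place of those coarse bounds then immediately yields the improvements $d>C_{\lambda_0}\kappa$ and $d+O_{\lambda_0}(1)$.

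\textbf{Multiplicity estimate via nodal domains.} The core task is to prove that if $G$ is a connected graph of maximum degree at most $\Delta$ whose second largest adjacency eigenvalue equals $\lambda<3/\sqrt{2}$, then $\dim\ker(A(G)-\lambda I)=O_{\Delta,\lambda}(1)$. The proposed route is via nodal domains. The discrete Courant theorem says that any eigenfunction for the $k$-th largest eigenvalue has at most $k+r(G)-1$ strong nodal domains, where $r(G)=|E(G)|-|V(G)|+1$ is the cyclomatic number; at the second eigenvalue this gives at most $r(G)+1$. For the opposing direction, I would start from an $m$-dimensional $\lambda$-eigenspace, select vertices $v_1,\dots,v_s$ pairwise at graph distance $\ge 2$ in its support, and use the linear freedom of the eigenspace to prescribe signs at the $v_i$. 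The constraint $\lambda<3/\sqrt{2}$ (Hoffman's limit) forces a rigid local structure around each $v_i$ via the classification of graphs with spectral radius below $3/\sqrt{2}$, allowing each sign prescription to produce a distinct nodal domain. Comparing the two inequalities gives $m\le C(\Delta,\lambda)\cdot(r(G)+1)$, and a separate combinatorial step (using maximum-degree boundedness and the fact that $\lambda$ is the spectral radius of some connected component) bounds $r(G)$ by $O_{\Delta,\lambda}(1)$, closing the loop.

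\textbf{Main obstacle.} The genuine difficulty is the constructive direction: converting a high-dimensional eigenspace into a single eigenfunction with many nodal domains. A naive sign prescription does not immediately yield disjoint nodal domains, because neighbouring sign regions can merge through vertices outside the prescribed set. The assumption $\lambda<3/\sqrt{2}$ must be used essentially, since exactly at $3/\sqrt{2}$ one meets the Smith/Hoffman trees (the $E$-type Dynkin diagrams and their subdivisions) whose $\lambda$-eigenspaces can be large without producing correspondingly many nodal domains, so any cleaner bound would fail. Threading the construction through the classification of graphs with spectral radius below $3/\sqrt{2}$, so that enough sign prescriptions remain feasible and each genuinely creates a separate nodal domain, is where the main technical work lies; the rest of the argument is essentially book-keeping combined with the existing JTYZZ and Balla machinery.
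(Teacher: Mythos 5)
Your reduction of Theorem \ref{thm:main} to the multiplicity estimate is essentially the paper's: one combines the graph--Gram-matrix correspondence with a degree bound depending only on $\lambda_0$ (the paper uses the Balla--Buci\'c form $\Delta_G\le 6/\alpha^4$ in Lemma \ref{lemma:equi-and-graph}), the lower bound of Proposition \ref{pro:lower bound}, and a short case analysis according to whether $\lambda$ is an eigenvalue of $A_G$ and whether $\lambda_1(G)=\lambda$; inserting a multiplicity bound of the form $O_{\Delta,\lambda_0}(1)$ then yields both the linear threshold $d>C_{\lambda_0}\kappa$ and the $d+O_{\lambda_0}(1)$ bound. That part of your plan, though only sketched, matches the paper.

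The genuine gap is in the multiplicity estimate itself, which you correctly identify as the core task but do not actually prove, and your proposed route as stated would fail. The upper bound you invoke --- that an eigenfunction of the $k$-th eigenvalue has at most $k+r(G)-1$ strong nodal domains --- is not a theorem: for the star $K_{1,n}$ the second largest adjacency eigenvalue $0$ has eigenfunctions with as many as $n$ strong nodal domains while $r(G)=0$; the correct Courant-type bounds carry an additive multiplicity term $m-1$, so comparing them against an eigenfunction with roughly $m$ prescribed sign changes gives a vacuous inequality. The paper escapes this circularity by using the degree-dependent bound of Lin--Lippner--Mangoubi--Yau \cite{lin2010nodalmulti}, $\mathfrak{S}_G(f)\le (k-1)\Delta_G$, which does not degrade with multiplicity. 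The construction you defer as the ``main obstacle'' is precisely the paper's Lemma \ref{lemma:multi tree}: take a basis $f_1,\dots,f_m$ dual to $m$ vertices, order these vertices by distance to the root of a spanning tree, and add $\pm f_j$ greedily so that the resulting single eigenfunction changes sign (or vanishes) between each $v_j$ and its tree-parent; this produces at least $m-\ell_G$ strong nodal domains and hence $m\le (k-1)\Delta_G+\ell_G$, with no need for a classification of graphs of small spectral radius or for a distance-separated vertex selection. Your ``separate combinatorial step'' bounding $r(G)$ is also not supplied; the paper obtains the needed control by deleting a ball of radius $n_\lambda+2$ around a shortest cycle (or a spider center), using that any connected graph with cyclomatic number at least $2$ contains one of the graphs of Definition \ref{def:4graph} $(a)$--$(c)$ and so has $\lambda_1\ge \frac{3}{\sqrt 2}$ (Proposition \ref{pro:two-cycle}), together with the edge-disjoint lemma of \cite{balla2024equiangular-exponential-regime} and Cauchy interlacing. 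Two side corrections: $\frac{3}{\sqrt 2}$ is not Hoffman's limit $\sqrt{2+\sqrt 5}\approx 2.058$, and the Smith graphs have spectral radius exactly $2$; the relevant extremal objects near $\frac{3}{\sqrt 2}$ are the spiders $\mathcal{T}(\ell,\ell,\ell)$ and the minimal two-cycle graphs.
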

%Up to now, all examples of equiangular lines of size $\Theta(n^2)$ have a common angle
%on the order of $\mathrm{arc\,cos}(\sqrt{\frac{1}{n}})$. By the uniform bound of  Our Theorem \ref{thm:main}, we have that for any example of equiangular lines such that $N(d)=\Theta(n^2)$, their common angle can not lie in $[\alpha_0,1)$, where $\alpha_0<\frac{1}{3\sqrt{2}+1}$ is a constant.  
In particular, Theorem \ref{thm:main} $(b)$ tells that Conjecture \ref{conj:infty} holds for any $\alpha>\frac{1}{3\sqrt{2}+1}$.

We prove Theorem \ref{thm:main} by establishing the following eigenvalue multiplicity estimation. 
For given $\Delta, \lambda>0$, Jiang et al. asked \cite[Question 6.4]{Jiang-Tidor-Yao-Zhang-Zhao-21} what is the maximum multiplicity that $\lambda$ can appear as the second largest eigenvalue of a connected $n$-vertex graph with maximum degree at most $\Delta$. Moreover, they noticed that Conjecture \ref{conj:infty} holds for any $\lambda$ with maximum multiplicity $O_{\Delta,\lambda}(1)$. We show that any $\lambda<\frac{3}{\sqrt{2}}$ has maximum multiplicity $O_{\Delta,\lambda}(1)$.

\begin{theorem}\label{thm:multi}
  Let $G$ be a connected graph,  $\lambda_2(G)$ be its second largest adjacency eigenvalue, and $m_{\lambda_2(G)}$ be the multiplicity of $\lambda_2(G)$. If $\lambda_2(G)\leq \lambda<\frac{3}{\sqrt{2}}$ and the maximum degree of $G$ does not exceed $\Delta$,  then we have $$m_{\lambda_2(G)}\leq 2n_{\lambda}\Delta^{n_{\lambda}+3}\left(1+\Delta+\Delta^2\right),$$
where $n_\lambda$ is a constant depending only on $\lambda$.
\end{theorem}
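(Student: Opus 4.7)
My plan is to bound $m := m_{\lambda_2(G)}$ by combining the theory of nodal domains for graph eigenfunctions with the classical classification of connected graphs of small spectral radius. The starting observation is that, for $\lambda < 3/\sqrt{2}$, there is a constant $n_\lambda$ such that every connected graph $H$ with adjacency spectral radius $\lambda_1(H) = \lambda$ and maximum degree at most $\Delta$ satisfies $|V(H)| \le n_\lambda$; this is essentially Smith's classification of graphs with $\lambda_1 \le 2$ together with its refinements into the range $[2, 3/\sqrt{2})$. Let $\mathcal{F}_\lambda$ denote the resulting finite family of admissible ``nodal-domain shapes''.

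Since the $\lambda$-eigenspace $E_\lambda$ has dimension $m$, for any vertex set $S \subseteq V(G)$ with $|S| \le m-1$ there is a nonzero $f \in E_\lambda$ vanishing on $S$. The nodal-domain construction I would employ is: choose $S$ so that, for the extracted $f$, some maximal connected subset $D \subseteq V(G) \setminus S$ on which $f$ has constant sign satisfies $N(D) \setminus D \subseteq S$. For such a $D$, the restriction $f|_D$ is a strictly positive (or negative) eigenvector of $A_{G[D]}$ with eigenvalue $\lambda$, so Perron--Frobenius forces $\lambda_1(G[D]) = \lambda$; hence $G[D] \in \mathcal{F}_\lambda$ and $|D| \le n_\lambda$. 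The aim is then to produce many such $D$'s and count them.

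Quantitatively I propose the following greedy extraction. Starting from a vertex $v$ of $G$, impose the vanishing of $f$ on the closed second neighborhood $N_{\le 2}(v)$, which consumes at most $1 + \Delta + \Delta^2$ linear conditions on $E_\lambda$; the residual eigenfunction's nodal domain containing $v$ is then pinned inside $N_{\le 1}(v)$ and therefore has size at most $n_\lambda$, producing an induced copy of some $F \in \mathcal{F}_\lambda$ rooted at $v$. Iterating yields approximately $(m-1)/(1+\Delta+\Delta^2)$ disjoint rooted copies of members of $\mathcal{F}_\lambda$ inside $G$. Since the total number of rooted embeddings of members of $\mathcal{F}_\lambda$ into $G$ is bounded by $2 n_\lambda \Delta^{n_\lambda+3}$ (counting at most $2n_\lambda$ choices of root and sign per shape, at most $\Delta^{n_\lambda+3}$ ways to embed the shape together with a small buffer neighborhood, and absorbing $|\mathcal{F}_\lambda|$ into $n_\lambda$), one arrives at the bound $m \le 2 n_\lambda \Delta^{n_\lambda+3}(1 + \Delta + \Delta^2)$.

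The main obstacle is the greedy extraction step: forcing the nodal domain at $v$ to remain confined to $N_{\le 1}(v)$ requires careful use of the rigidity of eigenvectors supported on $\mathcal{F}_\lambda$-graphs and a precise choice of $v$ (for instance, as a local extremum of $|f|$). A naive version may allow the nodal domain to leak across the imposed zero boundary through vertices where $f$ vanishes for extraneous reasons, producing domains that are not of $\mathcal{F}_\lambda$-type or copies that overlap and spoil the count. The ``novel construction'' announced in the abstract is, I expect, precisely a scheme for picking $v$ and the auxiliary vanishing set so that each extracted nodal domain is genuinely an element of $\mathcal{F}_\lambda$ with disjoint support across iterations; the cyclomatic number (mentioned in the abstract as part of an intermediate estimate) should enter through a control on how many such independent extractions can be performed before the eigenspace is exhausted.
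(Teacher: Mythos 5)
There is a genuine gap, and it sits at the very foundation of your plan. Your ``starting observation'' --- that for $\lambda<3/\sqrt{2}$ there is a constant $n_\lambda$ such that every connected graph $H$ with bounded degree and $\lambda_1(H)$ equal to the relevant eigenvalue has at most $n_\lambda$ vertices --- is false. Cycles $C_n$ of every length have $\lambda_1=2<3/\sqrt{2}$, and paths $P_n$ have $\lambda_1=2\cos(\pi/(n+1))\to 2$; so the family of possible ``nodal-domain shapes'' is infinite, and no bound on $|D|$ depending only on the upper bound $\lambda$ (rather than on the exact value $\lambda_2(G)$, which may approach $2$) can exist. Consequently the conclusion ``$G[D]\in\mathcal{F}_\lambda$ and $|D|\le n_\lambda$'' that drives your whole counting scheme is unavailable: a nodal domain whose outer boundary is forced to vanish is indeed a Perron eigenvector of $G[D]$ for the eigenvalue $\lambda_2(G)$, but $G[D]$ may be an arbitrarily long path or cycle. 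The correct role of $n_\lambda$ (Definition \ref{def:n}) is different: it is the depth at which the spider $\mathcal{T}(n_\lambda,n_\lambda,n_\lambda)$ has spectral radius exceeding $\lambda$, and it is used to locate one bounded-size ball whose induced subgraph has $\lambda_1>\lambda$, not to bound the size of nodal domains.

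The quantitative part of your argument also does not go through as written. Imposing vanishing on the closed second neighborhood $N_{\le 2}(v)$ forces $f(v)=0$ as well, so there is no ``nodal domain containing $v$''; even with a corrected annulus version, nothing guarantees that the surviving eigenfunction is nonzero near $v$ or that its domain stays inside $N_{\le 1}(v)$ --- this is exactly the ``leakage'' you acknowledge but do not resolve. Finally, the last counting step is incoherent: the number of pairwise disjoint copies of fixed shapes inside $G$ can grow linearly in $|V(G)|$, so it cannot be bounded by a quantity like $2n_\lambda\Delta^{n_\lambda+3}$ that is independent of $|V(G)|$; matching the numerology of the stated bound is not a substitute for an argument. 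The paper's proof proceeds along a different axis that avoids all of this: it first proves the purely combinatorial estimate $m_{\lambda_k(G)}\le(k-1)\Delta_G+\ell_G$ (Lemma \ref{lemma:multi tree}) by taking a basis of the eigenspace localized at $m$ vertices and combining it with signs $\pm1$ along a spanning tree so that the resulting single eigenfunction has at least $m-\ell_G$ strong nodal domains, which is then compared with the Lin--Lippner--Mangoubi--Yau bound $\mathfrak{S}_G(f)\le(k-1)\Delta_G$; it then deletes one ball $B^G_{n_\lambda+2}(\cdot)$ around a short cycle or around a copy of $\mathcal{T}(n_\lambda,n_\lambda,n_\lambda)$, uses Proposition \ref{pro:two-cycle} together with Lemma \ref{lemma:edge disjoint} to force every remaining component to have cyclomatic number at most $1$, and concludes by Cauchy interlacing. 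If you want to salvage your approach, you would essentially have to rediscover this structure: control multiplicity through the cyclomatic number rather than through a (nonexistent) finite list of admissible nodal-domain shapes.
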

 For a precise definition of $n_\lambda$, we refer to Definition \ref{def:n}. It is interesting to ask for the maximum number $a>0$ such that for any $\lambda\in (0,a)$, $\lambda$ can appear as the second largest eigenvalue of a connected graph with maximum degree at most $\Delta$ is $O_{\Delta, \lambda}(1)$.

The proof of Theorem \ref{thm:multi} is based on the concept of nodal domains. The idea of deriving upper bounds of eigenvalue multiplicity using nodal domains dates back to Cheng's pioneering work \cite{Cheng-76} in the context of Riemann surfaces. Recall that Cheng's multiplicity estimate is given in terms of the genus of the Riemann surface. Van Der Holst \cite{van-der-Holst-95} extended Cheng's approach to graphs, proving that the multiplicity of the second smallest Laplacian eigenvalue of a maximal planar graph is at most $3$. Following Cheng's idea, Lin, Lippner, Mangoubi, and Yau \cite{lin2010nodalmulti}  derived
an upper bound for the multiplicity of the $k$-th Laplacian eigenvalue in terms of the graph genus when the graph is $3$-connected and satisfies a quadratic volume-growth condition.
%, under specific conditions, an upper bound estimate for the multiplicity of the $k$-th eigenvalue of the Laplacian by employing the genus of the graph $G$. Since the genus of Riemann surfaces plays a pivotal role in the proof of Cheng's results, all subsequent results on graphs are closely related to the graph's genus.

Unlike the previous extensions of Cheng's idea in terms of genus, we show a multiplicity estimate in terms of the cyclomatic number of a graph (see Lemma \ref{lemma:multi tree}), via a novel construction of eigenfunctions with a large number of nodal domains. Furthermore, we show that, for a graph $G$ with $\lambda_2(G)<\frac{3}{\sqrt{2}}$, the cyclomatic number of $G$ is small after removing a vertex subset, whose size is bounded by a constant depending only on $\lambda_2(G)$. Then we finish the proof of Theorem \ref{thm:multi} by Cauchy's Interlace Theorem. 

We note that the methods developed in the graph setting also offer valuable insights for research in continuous spaces. For example, Letrouit and Machado \cite{Letroui-Machado-24} made progress on a conjecture of Colin de Verdière \cite{Colin-87} about the multiplicity of eigenvalues by extending the ideas of Jiang et al. \cite{Jiang-Tidor-Yao-Zhang-Zhao-21} to the setting of negatively curved surfaces. It is an interesting question to ask whether Lemma \ref{lemma:multi tree} admits a continuous analogue or not. 

 The paper is structured as follows. In Section \ref{sec:pre}, we collect preliminaries on the adjacency matrix and define $5$ classes of graphs that will be used in the proof of our main results. In Section \ref{sec:radius-graph}, we analyze the property of the spectral radius of graphs defined in Section \ref{sec:pre}. We prove Theorems \ref{thm:multi} and \ref{thm:main} in Section \ref{sec:multi} and Section \ref{sec:equiangular}, respectively.

\section{Notation and preliminaries}
\label{sec:pre}
In this paper, we only consider finite simple undirected graphs.

Let $G=(V,E)$ be a graph. %We label the vertices by $V:=\{v_1,v_2,\ldots ,v_{|V|}\}$, where $|V|$ is the cardinality of the set $V$.
Given a subset $V_0\subset V$, we denote by $G[V_0]$ the subgraph of $G$ induced by $V_0$, that is, $G[V_0]:=(V_0,E_0)$ with $E_0:=\{\{v,w\}\in E:v,w\in V_0\}$. For any vertex $v\in V$, we use $d_G(v)$ to denote the degree of $v$ in $G$. We define
\[\Delta_G:=\max_{v\in V}d_G(v)\] 
to be the maximum degree of $G$. The cyclomatic number of $G$ is defined as 
\[\ell_G:=|E|-|V|+1.\]

We denote by $\mathrm{diam}(G)$ the diameter of $G$, and by $\mathfrak{g}(G)$ the girth of $G$. For any $V_0\subset V$, we define $B^{G}_k(V_0)$ as follows
$$ B^{G}_k(V_0):=\{v\in V:\mathrm{dis}_G(x,V_0)\leq k\},$$
where $\mathrm{dis}_G(v,V_0)$ is the combinatorial distance between $v$ and $V_0$ in $G$. 

%The adjacency matrix $A_G:=(a_{ij})_{1\leq i\leq j\leq |V|}$ of $G$ is a $|V|\times |V|$ matrix, where $a_{ij}$ is defined as follows. 
%\begin{equation*}
%a_{ij}=\begin{cases}
%		1, &\text{if}\,\, \{i,j\}\in E;  \\
%	0, &\text{if}\,\,\{i,j\}\not\in E.
%	\end{cases}
%\end{equation*}
We use $A_G$ to denote the adjacency matrix of $G$ and use
\[\lambda_1(G)\geq \lambda_2(G)\geq \cdots\geq \lambda_{|V|}(G)\]
to denote all eigenvalues of $A_G$ with multiplicity. For any matrix $A$ and its eigenvalue $\lambda$, we use $m_{\lambda}^A$ to denote the multiplicity of $\lambda$ in $A$. If no confusion arises, we use $m_{\lambda}$ to denote $m_{\lambda}^A$ for ease of notation.

We introduce the following $5$ classes of graphs.
\begin{defn}\label{def:4graph}
    Let $p,q,\ell\in \mathbb{Z}^+$ be positive integers. We define the graphs $\mathcal{P}(p,q,\ell)$, $\mathcal{D}(p,q)$, $\mathcal{B}(p,q,\ell)$, $\mathcal{T}(p,q,\ell)$ and $\mathcal{H}(p,q)$ as follows.
\begin{itemize}
    \item[$(a)$] For $p,q\geq 2$ and $\ell\geq 1$, we define $\mathcal{P}(p,q,\ell)$ as the graph consisting of $3$ paths of length $p$, $q$ and $m$, respectively, where the three paths share exactly their endpoints.
        \item [$(b)$] For $p,q\geq 3$, we define $\mathcal{D}(p,q)$ as the graph consisting of two cycles of lengths $p$ and $q$, respectively, where the two cycles share exactly one vertex. 
        \item [$(c)$]  For $p,q\geq 3$ and $\ell\geq 1$, we define $\mathcal{B}(p,q,\ell)$ as the graph consisting of a path of length $\ell$ connecting two cycles of lengths $p$ and $q$, respectively.
        \item [$(d)$] For $p,q,\ell\geq 1$, we define $\mathcal{T}(p,q,\ell)$ as the graph consisting of three paths of lengths $\ell$, $p$ and $q$, respectively, where the three paths share exactly their initial vertex. 
            \item [$(e)$] For $p\geq 3$ and $q\geq 1$, we define $\mathcal{H}(p,q)$ as the graph consisting of a cycle of length $p$ and a path of length $q$, where the cycle and the path share exactly one vertex. 
    \end{itemize}
\end{defn}
The graphs $\mathcal{P}(2,2,3),\,\mathcal{D}(3,3),\,\mathcal{B}(3,3,2),\,\mathcal{T}(3,3,3)$ and $\mathcal{H}(3,2)$ are depicted in Figure \ref{fig}.

\begin{figure}[!htp]
	\centering
	\tikzset{vertex/.style={circle, draw, fill=black!20, inner sep=0pt, minimum width=3pt}}	

	\begin{tikzpicture}[scale=1.0]
 \draw    (-10,0) -- (-9,0) node[midway, above, black]{$ $} -- (-8,0) node[midway, above, black]{$ $} --(-7,0) node[midway, above, black]{$ $};

   \draw    (-10,0) -- (-8.5,-1) node[midway, below, black]{$ $}-- (-7,0) node[midway, above, black]{$ $} ;

      \draw    (-10,0) -- (-8.5,1) node[midway, above, black]{$ $}-- (-7,0) node[midway, above, black]{$ $} ;
        
		\node at (-10,0) [vertex, label={[label distance=0mm]90: \small $ $}, fill=black] {};
           
		\node at (-9,0) [vertex, label={[label distance=0mm]90: \small $ $}, fill=black] {};
           
		\node at (-8,0) [vertex, label={[label distance=0mm]90: \small $ $}, fill=black] {};
           
		\node at (-7,0) [vertex, label={[label distance=0mm]90: \small $ $}, fill=black] {};

		\node at (-8.5,1) [vertex, label={[label distance=0mm]90: \small $ $}, fill=black] {};
           
		\node at (-8.5,-1) [vertex, label={[label distance=5mm]270: \small (a) $\mathcal{P}(2,2,3) $}, fill=black] {};

 \draw    (-6,1) -- (-5,0) node[midway, above, black]{$ $} -- (-4,-1) node[midway, above, black]{$ $}-- (-4,1) node[midway, above, black]{$ $}-- (-6,-1) node[midway, above, black]{$ $} --(-6,1) node[midway, above, black]{$ $};

\node at (-6,1) [vertex, label={[label distance=0mm]90: \small $ $}, fill=black] {};

\node at (-5,0) [vertex, label={[label distance=15mm]270: \small (b) $\mathcal{D}(3,3) $}, fill=black] {};

\node at (-4,1) [vertex, label={[label distance=0mm]90: \small $ $}, fill=black] {};

\node at (-4,-1) [vertex, label={[label distance=0mm]90: \small $ $}, fill=black] {};

\node at (-6,-1) [vertex, label={[label distance=0mm]90: \small $ $}, fill=black] {};

 \draw    (-3,1) -- (-2,0) node[midway, above, black]{$ $} -- (-3,-1) node[midway, above, black]{$ $}-- (-3,1) node[midway, above, black]{$ $};

  \draw    (-2,0) -- (0,0) node[midway, above, black]{$ $} -- (1,1) node[midway, above, black]{$ $}-- (1,-1) node[midway, above, black]{$ $}-- (0,0) node[midway, above, black]{$ $};
  
\node at (-2,0) [vertex, label={[label distance=0mm]90: \small $ $}, fill=black] {};

\node at (0,0) [vertex, label={[label distance=0mm]90: \small $ $}, fill=black] {};

\node at (1,1) [vertex, label={[label distance=0mm]90: \small $ $}, fill=black] {};

\node at (1,-1) [vertex, label={[label distance=0mm]90: \small $ $}, fill=black] {};

\node at (0,0) [vertex, label={[label distance=0mm]90: \small $ $}, fill=black] {};

\node at (-3,-1) [vertex, label={[label distance=0mm]90: \small $ $}, fill=black] {};

\node at (-3,1) [vertex, label={[label distance=0mm]90: \small $ $}, fill=black] {};

\node at (-1,0) [vertex, label={[label distance=15mm]270: \small (c) $ \mathcal{B}(3,3,2)$}, fill=black] {};
%%%%%%%%%%%%%%%%%%%%%%%%%%%%%%%
  \draw    (-9,-4) -- (-8,-4) node[midway, above, black]{$ $} -- (-7,-4) node[midway, above, black]{$ $}-- (-6,-4) node[midway, above, black]{$ $};

  \draw    (-9,-4) -- (-8,-3) node[midway, above, black]{$ $} -- (-7,-3) node[midway, above, black]{$ $}-- (-6,-3) node[midway, above, black]{$ $};

  \draw    (-9,-4) -- (-8,-5) node[midway, above, black]{$ $} -- (-7,-5) node[midway, above, black]{$ $}-- (-6,-5) node[midway, above, black]{$ $};

\node at (-9,-4) [vertex, label={[label distance=15mm]270: \small $ $}, fill=black] {};

\node at (-8,-3) [vertex, label={[label distance=15mm]270: \small $ $}, fill=black] {};

\node at (-7,-3) [vertex, label={[label distance=15mm]270: \small $ $}, fill=black] {};

\node at (-6,-3) [vertex, label={[label distance=15mm]270: \small $ $}, fill=black] {};

  \node at (-8,-4) [vertex, label={[label distance=15mm]270: \small $ $}, fill=black] {};

\node at (-7,-4) [vertex, label={[label distance=15mm]270: \small $ $}, fill=black] {};

  \node at (-6,-4) [vertex, label={[label distance=15mm]270: \small $ $}, fill=black] {};

  \node at (-6,-5) [vertex, label={[label distance=15mm]270: \small $ $}, fill=black] {}; 

   \node at (-7,-5) [vertex, label={[label distance=5mm]270: (d) \small $\mathcal{T}(3,3,3) $}, fill=black] {};

    \node at (-8,-5) [vertex, label={[label distance=15mm]270: \small $ $}, fill=black] {};

  \draw    (-3,-3) -- (-2,-4) node[midway, above, black]{$ $} -- (-3,-5) node[midway, above, black]{$ $}-- (-3,-3) node[midway, above, black]{$ $};

   \draw    (-2,-4) -- (0,-4) node[midway, above, black]{$ $};

\node at (-3,-3) [vertex, label={[label distance=15mm]270: \small $ $}, fill=black] {};

\node at (-3,-5) [vertex, label={[label distance=15mm]270: \small $ $}, fill=black] {};

\node at (-2,-4) [vertex, label={[label distance=15mm]270: \small $ $}, fill=black] {};

\node at (0,-4) [vertex, label={[label distance=15mm]270: \small $ $}, fill=black] {};

\node at (-1,-4) [vertex, label={[label distance=15mm]270: \small (e) $\mathcal{H}(3,2)$}, fill=black] {};
	\end{tikzpicture}
	\caption{Illustrations of Definition \ref{def:4graph}}
	\label{fig}
\end{figure}
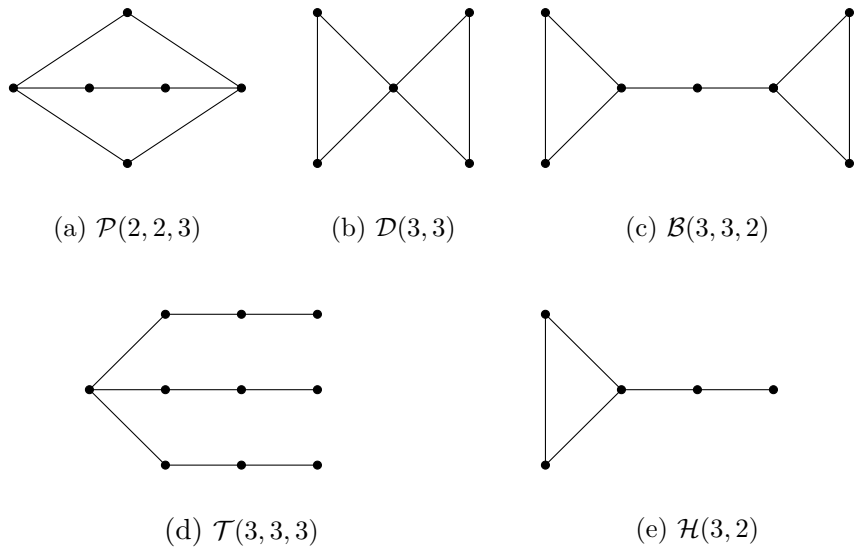

\begin{defn}
    A path $\{x_{i}\}_{i=1}^k\subset V$ of a graph $G$ is an end path if $d_G(x_i)=2$ for $1<i<k$ and $d_G(x_k)=1$.
\end{defn}
We recall two lemmas concerning the monotonicity of the spectral radius.
\begin{lemma}[{\cite[Lemma 3.1]{Hoffman-72}}]\label{lemma:mono1}
    Let $G'$ be a subgraph of $G$. We have $\lambda_1(G')\leq \lambda_1(G).$ If $G$ is connected and $G'$ is a proper subgraph of $G$, then we have $\lambda_1(G')<\lambda_1(G)$.
\end{lemma}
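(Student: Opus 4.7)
The plan is to invoke Perron--Frobenius theory. For the non-strict inequality $\lambda_1(G')\leq \lambda_1(G)$, I would use the variational characterization
\[
\lambda_1(G)=\max_{\|x\|_2=1}x^{T}A_{G}x,
\]
with the maximum attained by some non-negative vector, since $A_G$ is entrywise non-negative. Let $x'$ be a unit non-negative Perron eigenvector of $A_{G'}$ and extend it by zero to the vertices in $V(G)\setminus V(G')$. Viewing $A_{G'}$ on $V(G)\times V(G)$ after this zero extension, the matrix $A_G-A_{G'}$ is entrywise non-negative, so $(x')^{T}A_{G}x'\geq (x')^{T}A_{G'}x'=\lambda_1(G')$, and the variational principle yields $\lambda_1(G)\geq \lambda_1(G')$.

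For the strict inequality when $G$ is connected and $G'$ is a proper subgraph, I would argue by contradiction. Suppose $\lambda_1(G')=\lambda_1(G)=:\lambda$. Then the extended vector $x'$ attains the maximum in the variational principle for $A_G$ and must therefore be a $\lambda$-eigenvector of $A_G$. Since $G$ is connected, $A_G$ is irreducible, so Perron--Frobenius implies that every non-negative eigenvector in the top eigenspace is strictly positive on all of $V(G)$. This rules out the case $V(G')\subsetneq V(G)$ because $x'$ vanishes on $V(G)\setminus V(G')$. In the remaining case $V(G')=V(G)$ with $E(G')\subsetneq E(G)$, combining $A_G x'=\lambda x'$ and $A_{G'}x'=\lambda x'$ gives $(A_G-A_{G'})x'=0$; but $A_G-A_{G'}$ is a non-zero non-negative matrix and $x'>0$, so this product cannot vanish, a contradiction.

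No step of this argument is particularly delicate — it is the standard Perron--Frobenius comparison. The only point where the connectivity hypothesis is genuinely used is the strict-positivity claim for the Perron eigenvector of $A_G$, and this is precisely what converts the pointwise domination $A_G\geq A_{G'}$ (which by itself only gives the weak spectral inequality) into a strict one once $G'$ is a proper subgraph.
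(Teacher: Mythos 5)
Your argument is correct. Note that the paper does not prove this lemma at all; it is quoted as a known result from Hoffman's paper, and your Rayleigh-quotient plus Perron--Frobenius argument is exactly the standard proof of that classical fact (including the key point that, once the zero-extended vector $x'$ attains the maximum of the Rayleigh quotient of $A_G$, irreducibility forces it to be strictly positive, which kills both the missing-vertex and missing-edge cases). No gaps.
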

\begin{lemma}[{\cite[Proposition 4.1]{Hoffman-72}}]\label{lemma:mono2}
    Let $G=(V,E)$ be a connected graph with $\lambda_1(G)>2$ and $e=\{x,y\}$ be an edge which is not in any end path of $G$. We define $G'$ to be a graph obtained by removing $e$ from $G$ and adding a new vertex that is adjacent only to $x$ and $y$. Then we have $$\lambda_1(G')\leq \lambda_1(G).$$
\end{lemma}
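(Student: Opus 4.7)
The plan is to apply Perron--Frobenius via the Collatz--Wielandt principle. Set $\lambda := \lambda_1(G) > 2$ and let $\phi > 0$ be the Perron eigenvector of $A_G$ (unique up to scaling because $G$ is connected, so $A_G$ is irreducible). I will construct a strictly positive function $\psi$ on $V(G') = V(G) \cup \{v\}$ with $A_{G'}\psi \leq \lambda\psi$ componentwise. Since $G'$ is connected (subdivision preserves connectedness), the Collatz--Wielandt principle then delivers $\lambda_1(G') \leq \lambda$.

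My first attempt is the single-parameter candidate $\psi|_{V(G)} = \phi$ and $\psi(v) = t$ for a scalar $t > 0$. Because $e$ is the only edge altered, the identity $A_G\phi = \lambda\phi$ persists as equality in $A_{G'}\psi \leq \lambda\psi$ at every $u \in V(G) \setminus \{x,y\}$, so the nontrivial inequalities at $x$, $y$, and $v$ collapse to
\[
t \leq \phi(y),\qquad t \leq \phi(x),\qquad t \geq \frac{\phi(x)+\phi(y)}{\lambda}.
\]
Feasibility of this system (assuming without loss of generality $\phi(x) \leq \phi(y)$) is equivalent to the Perron-ratio bound $\phi(y) \leq (\lambda - 1)\phi(x)$, or, via the eigenvector equation at $x$, to $\sum_{u \sim_G x,\, u \neq y}\phi(u) \geq \phi(x)$.

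Establishing this ratio bound is the main obstacle; it is precisely here that both hypotheses are used. The condition that $e$ lies on no end path forces $d_G(x), d_G(y) \geq 2$ and, crucially, guarantees that the maximal chain of degree-$2$ vertices emanating from $x$ (respectively $y$) in the direction away from $e$ terminates at a branching vertex of degree at least $3$, not at a leaf. Along such a chain $\phi$ satisfies the linear recurrence $\lambda\phi(x_i) = \phi(x_{i-1}) + \phi(x_{i+1})$, whose characteristic polynomial $\mu^2 - \lambda\mu + 1$ has two distinct positive real roots $\mu_\pm = (\lambda \pm \sqrt{\lambda^2 - 4})/2$ precisely because $\lambda > 2$; this is where the $\lambda > 2$ hypothesis enters. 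The positivity of $\phi$ combined with the non-pendant terminus of the chain then yields enough quantitative control on the decay of $\phi$ along the chain to deduce the desired inequality.

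Should the single-parameter construction fail in some regime, I would fall back on modifying $\psi$ locally, for instance by decreasing $\psi(x)$ to $\phi(x) - \delta$ for a suitable $\delta > 0$; this only relaxes the inequality at each $u \sim_G x$ and at $v$, while adjusting the bound at $x$ itself. Iterating this localized adjustment down the degree-$2$ chain at $x$, governed by the $\mu_\pm$-recurrence, ultimately covers every Perron ratio up to the trivial upper bound $\phi(y)/\phi(x) \leq \lambda$ implied directly by the eigenvector equation at $x$. The step I expect to be most delicate is the clean packaging of this iterative local modification into one globally defined positive $\psi$, which forms the technical heart of Hoffman's subdivision argument.
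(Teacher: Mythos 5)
Your reduction is set up correctly: extending the Perron vector $\varphi$ of $G$ by a value $t$ at the new vertex and demanding $A_{G'}\psi\leq\lambda\psi$ componentwise does collapse to the three constraints you list, and feasibility is indeed equivalent to $\varphi(y)\leq(\lambda-1)\varphi(x)$ (with $\varphi(x)\leq\varphi(y)$), i.e.\ to $\sum_{u\sim_G x,\,u\neq y}\varphi(u)\geq\varphi(x)$. The genuine gap is that this intermediate inequality is simply \emph{false} in general under the lemma's hypotheses, so the "main obstacle" you defer cannot be overcome in the form you hope. Concretely, let $G$ consist of a large clique (so that $\lambda=\lambda_1(G)>3$) joined by a long path of degree-$2$ vertices $v_1,\dots,v_{n-1}$ to a vertex $v_n$ that lies on a triangle $v_n,a,b$. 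Every edge of this path lies on an internal path between two branch vertices, hence is in no end path. The eigenvector equations give $\varphi(a)=\varphi(b)=\varphi(v_n)/(\lambda-1)$ and therefore $\varphi(v_{n-1})=\bigl(\lambda-\tfrac{2}{\lambda-1}\bigr)\varphi(v_n)$, which exceeds $(\lambda-1)\varphi(v_n)$ precisely when $\lambda>3$. So for the edge $e=\{v_{n-1},v_n\}$ no choice of $t$ satisfies your three constraints, even though both endpoints' chains terminate at branch vertices of degree at least $3$ --- exactly the situation your sketch claims gives "enough quantitative control." Thus the single-parameter construction provably fails in a regime the lemma covers ($\lambda_1>2$ allows $\lambda_1>3$), and your fallback --- iteratively perturbing $\psi$ down the degree-$2$ chain --- is only announced, not carried out; you yourself flag it as the technical heart and leave it open. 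As it stands the proposal is not a proof.

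For comparison: the paper does not prove this lemma at all; it quotes it from Hoffman (1972), and the classical treatments (Hoffman, and Hoffman--Smith on subdividing edges of internal paths) argue via characteristic-polynomial/walk-generating-function identities relating $G$ and the subdivided graph $G'$, rather than by extending the Perron vector of $G$ to a test vector on $G'$. If you want to salvage a test-function route, you must allow the values on the whole degree-$2$ chain (and possibly beyond) to be redefined, e.g.\ by solving the $\mu_{\pm}$-recurrence on the subdivided chain with matched boundary data, and prove positivity and the supersolution inequality at the branch vertices; that is essentially a different and substantially longer argument than what you have written.
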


We recall the definition of strong nodal domains \cite{BLS07}. In the proof of our main theorem, we strategically utilized this concept.
\begin{defn}[Stong nodal domains]
   Let $G=(V,E)$ be a graph. A strong nodal domain of a function $f:V\to \mathbb{R}$ is a maximal connected component of the subgraph $G_0=(V_0,E_0)$, where $V_0=\{x\in V:f(x)\neq 0\}$ and $E_0=\{\{x,y\}\in E:f(x)f(y)>0\}$. We use $\mathfrak{S}_G(f)$ to denote the number of strong nodal domains of $f$ on $G$.
\end{defn}
  To conclude this section, we recall Cauchy’s interlace theorem. A direct proof of this theorem can be found in \cite{fisk2005interlacing-proof}.
\begin{theorem}[Cauchy’s Interlace Theorem]\label{thm:interlacing}
    Let $A$ be an $n\times n$ symmetric matrix, and
B be a $m\times m$ principal submatrix of $A$ with $m<n$. Denote by $\lambda_1(A)\geq \lambda_2(A)\geq \cdots\geq \lambda_n(A)$ all eigenvalues of $A$ and by $\lambda_1(B)\geq \lambda_2(B)\geq \cdots\geq \lambda_m(B)$ all eigenvalues of $B$. For any $1\leq k\leq n-m $, We have $$\lambda_k(A)\geq \lambda_k(B)\geq \lambda_{k+m-n}(A).$$
\end{theorem}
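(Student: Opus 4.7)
The plan is to reduce to the case $m = n-1$ by induction on $n-m$, and then apply the Courant--Fischer min-max characterization of symmetric eigenvalues. Throughout, the simple observation enabling the reduction is that any principal submatrix can be reached by deleting one row/column at a time, and the spectrum and symmetry are invariant under simultaneous row-column permutations, so we may always arrange the next deleted index to be the last one.

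For the base case $m = n-1$, after such a permutation we may assume $B$ occupies the upper-left $(n-1)\times(n-1)$ block of $A$. Identify $\mathbb{R}^{n-1}$ with the hyperplane $H = \{x \in \mathbb{R}^n : x_n = 0\}$ via $y \mapsto \tilde y := (y_1,\dots,y_{n-1},0)$; the crucial identities are $\tilde y^T A \tilde y = y^T B y$ and $\|\tilde y\| = \|y\|$. Now invoke the two forms of Courant--Fischer,
$$\lambda_k(M) \;=\; \max_{\dim U = k}\, \min_{u \in U\setminus\{0\}} \frac{u^T M u}{u^T u} \;=\; \min_{\dim W = N-k+1}\, \max_{w \in W\setminus\{0\}} \frac{w^T M w}{w^T w},$$
applied to $M = A, B$ in the respective dimensions. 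The inequality $\lambda_k(A) \geq \lambda_k(B)$ follows by embedding any $k$-dimensional subspace $U \subseteq \mathbb{R}^{n-1}$ into $H \subseteq \mathbb{R}^n$: the max-min on the right is taken over a strictly larger family of subspaces, so it can only grow. For the other side, $\lambda_k(B) \geq \lambda_{k+1}(A)$, take any $(k+1)$-dimensional $V \subseteq \mathbb{R}^n$; since $\dim(V \cap H) \geq (k+1) + (n-1) - n = k$, restricting the Rayleigh quotient to this intersection gives a subspace of dimension $\geq k$ in $\mathbb{R}^{n-1}$ whose min-Rayleigh quotient with respect to $B$ is at most $\lambda_k(B)$, hence $\min_{v \in V\setminus\{0\}} v^T A v / \|v\|^2 \leq \lambda_k(B)$, and maximizing over $V$ delivers the claim.

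To close, chain the base case: write $A = A_0, A_1, \ldots, A_{n-m} = B$ where each $A_{i+1}$ is a principal submatrix of $A_i$ obtained by deleting one row and column. Applying the base case inequalities $\lambda_k(A_i) \geq \lambda_k(A_{i+1}) \geq \lambda_{k+1}(A_i)$ repeatedly and telescoping yields
$$\lambda_k(A) \;\geq\; \lambda_k(A_1) \;\geq\; \cdots \;\geq\; \lambda_k(B) \quad\text{and}\quad \lambda_k(B) \;\geq\; \lambda_{k+1}(A_{n-m-1}) \;\geq\; \cdots \;\geq\; \lambda_{k+(n-m)}(A)$$
for the admissible range of $k$. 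There is no serious obstacle; the only steps requiring care are the dimension-count $\dim(V \cap H) \geq \dim V - 1$ in the base case (which is what forces the index shift by exactly one per deletion) and the bookkeeping of indices when telescoping, which accumulates the shift to exactly $n-m$. Everything else is routine linear algebra.
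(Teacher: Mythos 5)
Your proof is correct, but note that the paper does not actually prove this theorem: it is quoted as a classical result, with the proof delegated to the cited reference (Fisk's short proof), so there is no internal argument to compare yours against. What you give is the standard Courant--Fischer route: the base case $m=n-1$ via the max-min characterization, with the key dimension count $\dim(V\cap H)\ge \dim V-1$ producing the shift by one, and then telescoping through a chain of one-step principal submatrices. This is sound; the only substantive checks -- that the Rayleigh quotients of $A$ restricted to $H$ agree with those of $B$, and that the indices stay in range during the telescoping (you need $k\le m$ so that $\lambda_k(A_i)$ is defined for every intermediate $A_i$, and $k+(n-m)\le n$ automatically) -- are handled or immediate. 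The cited reference proves the same inequality by a genuinely different mechanism (a short argument with characteristic polynomials / a rank-one perturbation sending one eigenvalue to infinity), so your variational proof is an alternative, equally standard route; it generalizes painlessly to Hermitian matrices and to the min-max form, while Fisk's is shorter but less flexible. One caveat on the statement itself: what you prove is $\lambda_k(A)\ge\lambda_k(B)\ge\lambda_{k+n-m}(A)$ for $1\le k\le m$, which is the correct classical form and the one the paper actually uses (to control eigenvalue multiplicity after deleting $n-m$ vertices); the indices as printed in the theorem ($\lambda_{k+m-n}(A)$, with range $1\le k\le n-m$) are evidently a typo, since $k+m-n$ can be nonpositive there, so you were right to prove the corrected version rather than the literal one.
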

\section{Spectral radius of special graphs}\label{sec:radius-graph}
In this section, we study the spectral radius of those graphs in Definition \ref{def:4graph}. Furthermore, we introduce the constant $n_{\lambda}$ for $\lambda\in(0,\frac{3}{\sqrt{2}})$, which is needed in our Theorem \ref{thm:multi}. 

\begin{pro}\label{pro:T}
   The eigenvalue $\lambda_1(\mathcal{T}(\ell,\ell,\ell))$ is strictly increasing with respect to $\ell$. Moreover, we have $$\lim_{\ell\to \infty}\lambda_1(\mathcal{T}(\ell,\ell,\ell))=\frac{3}{\sqrt{2}}.$$
\end{pro}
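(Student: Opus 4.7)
The argument splits into strict monotonicity and identification of the limit.

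For strict monotonicity, observe that $\mathcal{T}(\ell,\ell,\ell)$ embeds as a proper subgraph of the connected graph $\mathcal{T}(\ell+1,\ell+1,\ell+1)$ (obtained by attaching one pendant vertex at the far end of each leg). Lemma \ref{lemma:mono1} then yields $\lambda_1(\mathcal{T}(\ell,\ell,\ell)) < \lambda_1(\mathcal{T}(\ell+1,\ell+1,\ell+1))$. Since $\lambda_1(\mathcal{T}(\ell,\ell,\ell)) \le \Delta_{\mathcal{T}(\ell,\ell,\ell)} = 3$, the increasing sequence has a well-defined limit $\lambda_\infty \in \mathbb{R}$.

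To identify $\lambda_\infty$, let $\lambda_\ell := \lambda_1(\mathcal{T}(\ell,\ell,\ell))$ and let $f_\ell$ be an associated Perron eigenvector (positive and unique up to scaling by Perron--Frobenius). The graph $\mathcal{T}(\ell,\ell,\ell)$ carries an $S_3$-symmetry permuting its three legs while fixing the center, and uniqueness forces $f_\ell$ to be $S_3$-invariant. Thus $f_\ell(v)$ depends only on the combinatorial distance $j \in \{0, 1, \ldots, \ell\}$ from the center. Writing $f_\ell(j)$ for this common value, the eigenvalue equations reduce to
\[
\lambda_\ell f_\ell(0) = 3 f_\ell(1),\qquad \lambda_\ell f_\ell(j) = f_\ell(j-1) + f_\ell(j+1)\ \text{ for } 1 \le j \le \ell - 1,\qquad \lambda_\ell f_\ell(\ell) = f_\ell(\ell-1).
\]

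A short characteristic-polynomial computation for $\ell = 3$ (e.g.\ via Schwenk's reduction for trees) shows that $\lambda_3$ is the largest root of $x^4 - 5x^2 + 3$, which strictly exceeds $2$; combined with the monotonicity above, $\lambda_\ell > 2$ for every $\ell \ge 3$. I may then write $\lambda_\ell = r_\ell + r_\ell^{-1}$ with $r_\ell > 1$ and $f_\ell(j) = A r_\ell^j + B r_\ell^{-j}$; the interior recurrence is automatic. Setting $f_\ell(\ell + 1) := 0$, the endpoint condition yields $B = -A r_\ell^{2(\ell+1)}$, while the center condition, after substituting $\lambda_\ell = r_\ell + r_\ell^{-1}$ and clearing denominators, reduces to $A(1 - 2 r_\ell^2) + B(r_\ell^2 - 2) = 0$. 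Eliminating $A$ and $B$ produces the transcendental relation
\[
r_\ell^{2(\ell+1)} (2 - r_\ell^2) = 2 r_\ell^2 - 1.
\]
For $r_\ell > 1$ the right-hand side is strictly positive, forcing $r_\ell < \sqrt{2}$, i.e.\ $\lambda_\ell < 3/\sqrt{2}$. Since monotonicity gives $\lambda_\ell \ge \lambda_3 > 2$, the quantity $r_\ell$ stays bounded away from $1$, so the exponential factor $r_\ell^{2(\ell+1)}$ diverges as $\ell \to \infty$; matching the bounded right-hand side then forces $2 - r_\ell^2 \to 0^+$. Hence $r_\ell \to \sqrt{2}$ and $\lambda_\infty = \sqrt{2} + 1/\sqrt{2} = 3/\sqrt{2}$.

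The monotonicity half is essentially free from the preliminaries. The substantive work is the clean derivation of the transcendental relation from the $S_3$-reduced eigenvalue problem; once the equation is in hand, the asymptotics are forced. The only delicate point is the spurious branch $r_\ell \to 1$ (which satisfies the equation at $r=1$ identically), and this is exactly what the explicit lower bound $\lambda_3 > 2$ rules out.
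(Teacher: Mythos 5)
Your proposal is correct and follows essentially the same route as the paper: both reduce via the three-fold symmetry to the same quotient (tridiagonal) system for the radial values, and your substitution $\lambda_\ell=r_\ell+r_\ell^{-1}$ is exactly the paper's $\theta(t)$, so your secular equation $r_\ell^{2(\ell+1)}(2-r_\ell^2)=2r_\ell^2-1$ is the paper's characteristic-polynomial recursion $P_\ell=tP_{\ell-1}-P_{\ell-2}$ in eigenvector form. A minor bonus of your version is that it also yields the explicit uniform bound $\lambda_1(\mathcal{T}(\ell,\ell,\ell))<3/\sqrt{2}$ for every $\ell$, and you correctly dispose of the spurious branch $r_\ell\to1$ using $\lambda_3>2$.
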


\begin{proof}
The first statement follows directly from Lemma \ref{lemma:mono1}. Since $\lambda_1(\mathcal{T}(\ell,\ell,\ell))\leq \Delta_{\mathcal{T}(\ell,\ell,\ell)}= 3$, the limit $\lim_{\ell\to\infty}\lambda_1\left(T(\ell,\ell,\ell)\right)$ exists. To compute this limit, we define a matrix 
\[B_1=
\begin{bmatrix}
0 & 3 \\
1 & 0
\end{bmatrix},
\]
and recursively define
\[
B_i=\begin{bmatrix}
\mathbf{B}_{i-1} &\begin{matrix}0\\\vdots\\0\\1\end{matrix}\\
\begin{matrix}0&\cdots&0&1\end{matrix}&\begin{matrix}0\end{matrix}
\end{bmatrix}
,\] 
for any $i\geq 2$.

Let $f\in \mathbb{R}^{\ell+1}$ be an eigenvector of $B_{\ell}$ corresponding to its spectral radius $\rho(B_{\ell})$. Due to the Perron-Frobenius theorem, $f$ is a positive vector, that is, every coordinate of $f$ is positive. We denote by $V_{\ell}$ the vertex set of $\mathcal{T}(\ell,\ell,\ell)$, and by $x_0$ the unique vertex of degree $3$ in $\mathcal{T}(\ell,\ell,\ell)$.
Define a function $g:V_{\ell}\to \mathbb{R}$ to be $g(x):=f\left(d(x,x_0)+1\right)$, where $f(i)$ is the $i$-th coordinate of $f$ and $d(x,x_0)$ is the combinatorial distance  between $x$ and $x_0$ in  $\mathcal{T}(\ell,\ell,\ell)$. We directly check that $g$ is an eigenfunction of $A_{\mathcal{T}(\ell,\ell,\ell)}$ corresponding to the eigenvalue $\rho(B_\ell)$. Since $g$ is positive on every vertex of $V_{\ell}$, we have $\rho(B_{\ell})=\lambda_1(\mathcal{T}(\ell,\ell,\ell)$  by the Perron-Frobenius theorem. In particular, we have $$\lim_{\ell\to\infty}\rho\left(B_{\ell}\right)=\lim_{\ell\to \infty}\lambda_1\left(\mathcal{T}(\ell,\ell,\ell)\right).$$

Let $P_{\ell}(t)$ denote the characteristic polynomial of $B_{\ell}$ for $\ell\geq 1$. Define $P_0(t)=t$. Expanding the determinant of $tI-B_{\ell}$ along the $(\ell+1)$-th row, we have $P_{\ell}(t)=tP_{\ell-1}(t)-P_{\ell-2}(t)$ for $\ell\ge 2$. According to this recursion, we have \begin{equation}\label{eq:1}
    P_{\ell}(t)=\left(\frac{\theta(t)P_1(t)-P_0(t)}{\theta^2(t)-1}\right)\theta^\ell(t)+\left(\frac{P_0(t)-\frac{P_1(t)}{\theta(t)}}{1-\theta^{-2}(t)}\right)\theta^{-\ell}(t),\end{equation}
where \[\theta(t)=\frac{t+\sqrt{t^2-4}}{2}.\] By Equation (\ref{eq:1}) and the fact that $\lambda_1(\mathcal{T}(\ell,\ell,\ell))>2$ for $\ell>2$, we have that $\lim_{\ell\to \infty}\lambda_1(\mathcal{T}(\ell,\ell,\ell))$ equals the largest root of $\theta(t)P_1(t)-P_0(t)$. Inserting $P_0(t)=t$ and $P_1(t)=t^2-3$, we have that the largest root of $\theta(t)P_1(t)-P_0(t)$ is $\frac{3}{\sqrt{2}}$. This completes the proof of this proposition.
\end{proof}

\begin{pro}\label{pro:two-cycle}
Let $G$ be a graph from the classes defined in Definition \ref{def:4graph} $(a)$, $(b)$, or $(c)$. Then we have $$\lambda_1(G)\geq \frac{3}{\sqrt{2}}.$$
\end{pro}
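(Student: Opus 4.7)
The plan is to combine the subdivision monotonicity of Lemma \ref{lemma:mono2} with the limit formula $\lim_{\ell\to\infty}\lambda_1(\mathcal{T}(\ell,\ell,\ell)) = 3/\sqrt{2}$ from Proposition \ref{pro:T}. Concretely, for each of the three families in the statement I will show that $\lambda_1$ is non-increasing in every defining parameter and that $\mathcal{T}(k,k,k)$ embeds as a subgraph for arbitrarily large $k$, so that the bound $3/\sqrt{2}$ emerges in the limit.

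To apply Lemma \ref{lemma:mono2} I first check its hypotheses. Every vertex of $\mathcal{P}(p,q,\ell)$, $\mathcal{D}(p,q)$, or $\mathcal{B}(p,q,\ell)$ has degree at least $2$, so these graphs contain no end path. Each properly contains a cycle (two of the three paths in $\mathcal{P}$, or either summand cycle in $\mathcal{D}$ or $\mathcal{B}$), so Lemma \ref{lemma:mono1} gives $\lambda_1(G) > \lambda_1(C_n) = 2$. Consequently, Lemma \ref{lemma:mono2} applies to any edge, and subdividing an edge on (say) the length-$p$ component produces the same type of graph with $p$ replaced by $p+1$ and $\lambda_1$ non-increasing. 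Iterating, $\lambda_1(\mathcal{P}(p,q,\ell))$ is non-increasing in each of $p, q, \ell$, and similarly for $\mathcal{D}(p,q)$ and $\mathcal{B}(p,q,\ell)$. A convenient feature is that every intermediate graph still properly contains a cycle, so the hypothesis $\lambda_1 > 2$ is preserved throughout the iteration.

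Next I exhibit $\mathcal{T}(k,k,k)$ as a subgraph. Pick a vertex $u$ of degree at least $3$, namely either hub in $\mathcal{P}$ or $\mathcal{B}$, or the unique degree-$4$ vertex in $\mathcal{D}$, and walk $k$ steps along three distinct edges emanating from $u$; the induced subgraph on the $3k+1$ vertices so obtained is $\mathcal{T}(k,k,k)$ whenever the three walks are pairwise disjoint off $u$. Disjointness holds when $k < \min(p,q,\ell)$ in $\mathcal{P}(p,q,\ell)$ (no walk reaches the opposite hub); when $k < p/2$ and $k < q$ in $\mathcal{D}(p,q)$ (walking both directions on the $p$-cycle and one direction on the $q$-cycle); and when $k < p/2$ and $k < \ell$ in $\mathcal{B}(p,q,\ell)$ (walking both directions on the $p$-cycle and along the connecting path from the hub on the $p$-cycle). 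Lemma \ref{lemma:mono1} then yields $\lambda_1(\mathcal{T}(k,k,k)) \leq \lambda_1(G)$ in each case.

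Putting the ingredients together: given $G$ with fixed parameters and any $k$, I pick auxiliary parameters exceeding both the originals and the disjointness thresholds from the previous paragraph. Monotonicity and subgraph containment combine to yield
\[
\lambda_1(G_{\mathrm{original}}) \;\geq\; \lambda_1(G_{\mathrm{auxiliary}}) \;\geq\; \lambda_1(\mathcal{T}(k,k,k)),
\]
and letting $k \to \infty$ together with Proposition \ref{pro:T} delivers the desired conclusion $\lambda_1(G) \geq 3/\sqrt{2}$. I do not anticipate any deep obstacle; the one thing requiring care is the elementary combinatorial check that the three walks from $u$ remain disjoint, which is what produces the explicit thresholds on $p,q,\ell$ above.
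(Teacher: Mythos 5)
Your proposal is correct and follows essentially the same route as the paper: repeated edge subdivision via Lemma \ref{lemma:mono2} (valid since these graphs properly contain a cycle, hence $\lambda_1>2$, and have no end paths), then extraction of $\mathcal{T}(k,k,k)$ as a subgraph and Lemma \ref{lemma:mono1}, then the limit from Proposition \ref{pro:T}; the paper merely phrases this as a contradiction and writes out only the $\mathcal{P}(p,q,\ell)$ case via the chain $\lambda_1(\mathcal{P}(p,q,\ell))\geq\lambda_1(\mathcal{P}(pk+1,qk+1,\ell k+1))\geq\lambda_1(\mathcal{T}(pk,qk,\ell k))\geq\lambda_1(\mathcal{T}(k,k,k))$. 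One cosmetic remark: you only need $\mathcal{T}(k,k,k)$ as a subgraph (not an induced one) for Lemma \ref{lemma:mono1}, so the edge cases where your walk endpoints are adjacent to $u$ cause no trouble.
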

\begin{proof} 
We present only the proof for the case $G=\mathcal{P}(p,q,\ell)$ for $p,q\geq2$ and $\ell\geq 1$. The other cases $G=\mathcal{D}(p,q)$ with $p,q\geq 3$ and $G=\mathcal{B}(p,q,\ell)$ with $p,q\geq 3$ and $\ell\geq 1$ can be shown similarly.

 We prove this by contradiction. Assume that $\lambda_1(\mathcal{P}(p,q,\ell))<\frac{3}{\sqrt{2}}$. By Proposition \ref{pro:T}, there exists a large enough integer $k$  such that $\lambda_{1}(\mathcal{T}(k,k,k))> \lambda_1(\mathcal{P}(p,q,\ell))$. Then, we estimate %result in \cite{Hoffman-Smith-74} (see also \cite[Proposition 3.1.4]{Brouwe-Haemers-12}), which implies that the largest eigenvalue of a graph decreases when any vertex is added to a cycle within the graph, we have 
 \begin{equation*}
     \begin{aligned}
         \lambda_1(\mathcal{P}(p,q,\ell))\geq \lambda_1(\mathcal{P}(pk+1,qk+1,\ell k+1))\geq \lambda_1(\mathcal{T}(pk,qk,\ell k))\geq \lambda_{1}(\mathcal{T}(k,k,k))>\lambda_1(\mathcal{P}(p,q,\ell)),
     \end{aligned}
 \end{equation*}
 where we applied Lemma \ref{lemma:mono2} in the first inequality, and Lemma \ref{lemma:mono1} in the second inequality above. This is a contradiction.
\end{proof}
Based on Proposition \ref{pro:T}, we introduce the following definition.
\begin{defn}\label{def:n}
    For any $\lambda<\frac{3}{\sqrt{2}}$, we define $n_{\lambda}$ as the smallest integer such that \[\lambda_1\left(\mathcal{T}\left(n_{\lambda},n_{\lambda},n_{\lambda}\right)\right)>\lambda.\] 
\end{defn}

\section{Proof of Theorem \ref{thm:multi}}
\label{sec:multi}
To establish Theorem \ref{thm:multi}, we first prove the following upper bound estimate for the multiplicity of adjacency eigenvalues in terms of the maximum degree and the cyclomatic number. 
\begin{lemma}\label{lemma:multi tree}
     Let $G=(V,E)$ be a connected graph, we have $m_{\lambda_k(G)}\leq (k-1)\Delta_G+\ell_G$.
\end{lemma}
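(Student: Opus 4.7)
The plan is to combine two bounds on the number of strong nodal domains $\mathfrak{S}_G(f)$ of a carefully chosen eigenfunction $f$ in $V_\lambda := \ker(A_G - \lambda I)$, where $\lambda := \lambda_k(G)$ and $m := m_{\lambda_k(G)}$. The first is a Courant-type upper bound $\mathfrak{S}_G(f) \leq k + m - 1$ valid for every nonzero $f \in V_\lambda$; the second is a constructive lower bound, coming from a novel construction of an eigenfunction with many nodal domains that exploits the near-tree structure of $G$. Pitting the two against each other will immediately yield $m \leq (k-1)\Delta_G + \ell_G$.

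The upper bound is standard: enumerate the strong nodal domains of $f$ as $D_1, \ldots, D_s$ and consider the disjointly supported test vectors $f_i := f \cdot \mathbf{1}_{D_i}$. On each $D_i$ the eigen-equation splits as $(A_{G[D_i]} f|_{D_i})(v) = \lambda f(v) - \sum_{u \sim v,\, u \notin D_i} f(u)$; since $f$ has constant sign on $D_i$ while its neighbors outside $D_i$ carry the opposite sign or vanish, the boundary sum has the sign of $-f(v)$, and the Rayleigh quotient $\langle A_G f_i, f_i \rangle / \|f_i\|^2$ is therefore $\geq \lambda$. Linear independence of the $f_i$ together with the min--max characterization of $\lambda_s(G)$ then forces $s \leq k + m - 1$.

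For the lower bound I would fix a spanning tree $T$ of $G$ and choose a vertex subset $S \subset V$ of size $k - 1$ consisting of ``branching'' internal vertices of $T$ of near-maximum degree. The forest $T[V \setminus S]$ then has up to $1 + (k-1)(\Delta_G - 1)$ components, and the $\ell_G$ non-tree edges of $G$ can merge at most $\ell_G$ pairs of these, so $G[V \setminus S]$ still contains at least $1 + (k-1)(\Delta_G - 1) - \ell_G$ components $C_1, \ldots, C_t$. Imposing the $k - 1$ vanishing conditions $f|_S = 0$ on the $m$-dimensional eigenspace $V_\lambda$ defines a subspace $W \subset V_\lambda$ of dimension $\geq m - (k - 1)$, and for every $f \in W$ the eigen-equation restricts to each component to force $f|_{C_i} \in \ker(A_{G[C_i]} - \lambda I)$. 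A dimension-counting argument then delivers $f \in W$ whose restrictions are nonzero on enough of the $C_i$ (and which display enough further sign changes inside each component) to guarantee $\mathfrak{S}_G(f) \geq 2m + k - 1 - (k-1)\Delta_G - \ell_G$. Comparing with the Courant-type upper bound then gives exactly $m \leq (k-1)\Delta_G + \ell_G$.

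The main obstacle is the constructive step: producing a single $f \in W$ whose component-restrictions are simultaneously nonzero on the predicted number of $C_i$'s, and whose within-component structure contributes enough extra strong nodal domains to account for the factor of $2$ in front of $m$ in the target lower bound. This must be done in the presence of the coupling linear constraints $\sum_{u \sim v} f(u) = 0$ at each $v \in S$ (forced by $(A_G f)(v) = \lambda f(v) = 0$) and of the non-tree edges of $G$, which both merge components of $T[V \setminus S]$ and tie together otherwise independent eigen-equations on the $C_i$. Tracking these interactions carefully is precisely where the cyclomatic number $\ell_G$ enters the final bound.
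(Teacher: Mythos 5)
There is a genuine gap, and it sits exactly where you flag it: the constructive lower bound is never established, and the route you set up cannot plausibly deliver it. Because your upper bound is the Courant-type estimate $\mathfrak{S}_G(f)\leq k+m-1$, whose right-hand side grows with $m$, you are forced to manufacture an eigenfunction with roughly $2m$ strong nodal domains (your target $\mathfrak{S}_G(f)\geq 2m+k-1-(k-1)\Delta_G-\ell_G$). But the mechanism you propose---killing $f$ on a set $S$ of $k-1$ branching vertices and looking at components of $G[V\setminus S]$---can only produce at most $1+(k-1)(\Delta_G-1)$ components, a quantity independent of $m$; one nodal domain per component therefore caps this part of the count at about $(k-1)\Delta_G$, and you have no control whatsoever over sign changes \emph{inside} a component (the restriction $f|_{C_i}$ could well be a Perron-type eigenfunction of $G[C_i]$ with a single nodal domain). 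The dimension count $\dim W\geq m-(k-1)$ gives many linearly independent functions, but linear independence of a subspace does not force any single member to oscillate; this is precisely the ``factor of $2$ in front of $m$'' you acknowledge you cannot account for, so the argument does not close.

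The paper avoids this trap by pairing the lower bound against a different upper bound, namely $\mathfrak{S}_G(f)\leq(k-1)\Delta_G$ (Lin--Lippner--Mangoubi--Yau, Theorem \ref{thm:nodal upper}), which does not involve $m$; then one only needs a single eigenfunction with at least $m-\ell_G$ strong nodal domains, a much more modest goal. That eigenfunction is built explicitly: choose a basis $\{f_i\}_{i=1}^m$ of the eigenspace in ``echelon'' form, $f_i(v_i)=1$ and $f_i(v_j)=0$ for $j\neq i$, $1\leq j\leq m$, arranged so that $v_i$ is a closest-to-root point of the support of $f_i$ in a rooted spanning tree $T$; then add the $f_j$ level by level with signs $\pm1$ chosen so that each distinguished vertex $v_j$ ($j\geq2$) ends up with sign opposite (or zero) to its parent. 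Since later additions never disturb values already fixed at smaller tree-distance or at the other distinguished vertices, the final function has at least $m$ strong nodal domains on $T$, hence at least $m-\ell_G$ on $G$ (each of the $\ell_G$ non-tree edges merges at most two domains), and the bound $m-\ell_G\leq(k-1)\Delta_G$ follows. If you want to salvage your approach, you should either switch to the $(k-1)\Delta_G$ upper bound and aim only for $m-\ell_G$ nodal domains, or supply a genuinely new mechanism forcing order-$m$ sign changes, which your current sketch does not contain.
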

To prove this lemma, we need the following upper bound estimate of strong nodal domains due to Lin, Lippner, Mangoubi and Yau \cite{lin2010nodalmulti}.
\begin{theorem}[{\cite[Theorem 1.1]{lin2010nodalmulti}}]\label{thm:nodal upper}
    Let $G$ be a connected graph and $f$ be an eigenfunction corresponding to $\lambda_k(G)$. Then we have $\mathfrak{S}_G(f)\leq (k-1)\Delta_G $.
\end{theorem}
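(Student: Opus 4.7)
The plan is to adapt the classical Courant--Fiedler discrete nodal domain argument to the adjacency matrix by combining a Perron--Frobenius estimate on each strong nodal domain with the Courant--Fischer min-max characterization of eigenvalues, and then to refine the resulting bound so that the maximum degree $\Delta_G$ replaces any dependence on the multiplicity $m_{\lambda_k}$ that a naive argument would produce.

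First I would set up the test functions. Let $D_1,\dots,D_m$ denote the strong nodal domains of $f$, and define $f_i$ to be the restriction of $f$ to $D_i$ extended by zero; each $f_i$ has constant sign. The first key step is a local eigenvalue lower bound: for any $x\in D_i$, the eigenfunction equation $(Af)(x)=\lambda_k f(x)$, together with the fact that every neighbor of $x$ outside $D_i$ satisfies $f(y)\le 0$ (when $f|_{D_i}>0$) by the definition of strong nodal domain, yields $A_{G[D_i]}(f_i|_{D_i})\ge \lambda_k f_i|_{D_i}$ coordinatewise. Since $f_i|_{D_i}$ is strictly positive, Perron--Frobenius on the connected subgraph $G[D_i]$ gives $\lambda_1(G[D_i])\ge \lambda_k$ and in particular $\langle f_i,Af_i\rangle\ge \lambda_k\|f_i\|^2$. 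The second key step is the off-diagonal control $\langle f_i,Af_j\rangle\le 0$ for $i\ne j$, which is immediate since same-sign domains share no edges by maximality, while opposite-sign edges contribute negative products.

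These two properties place the $m$-dimensional subspace $W=\mathrm{span}(f_1,\dots,f_m)$ in a position amenable to Courant--Fischer: after sign-normalizing so that all $f_i$ become nonnegative, the Gram-type matrix $M_{ij}=\langle f_i,Af_j\rangle$ has diagonal entries at least $\lambda_k\|f_i\|^2$ and nonnegative off-diagonal entries, and a Perron argument applied to $M$ produces a vector inside $W$ whose Rayleigh quotient with respect to $A_G$ is at least $\lambda_k$. Intersecting $W$ with the orthogonal complement of the top $k-1$ eigenvectors of $A_G$ and invoking Courant--Fischer would then, in the standard Courant--Fiedler fashion, deliver an inequality of the shape $m\le k+m_{\lambda_k}-1$.

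The main obstacle is that this naive conclusion involves the multiplicity $m_{\lambda_k}$, which is precisely the quantity being bounded in the applications (compare Lemma \ref{lemma:multi tree}) and must therefore be removed. This is where the factor $\Delta_G$ enters. I would refine the construction by producing additional test functions supported on subdomains obtained by successively peeling off interface vertices from the nodal domains: each interface vertex of $D_i$ has at most $\Delta_G$ neighbors outside $D_i$, so removing it can spawn at most $\Delta_G$ new test functions while perturbing the local Perron--Frobenius estimate only mildly. Quantifying the trade-off between the number of peelings required to enforce orthogonality against the top $k-1$ eigenvectors of $A_G$ and the corresponding multiplicative loss of $\Delta_G$ per eigenvector should yield the claimed bound $\mathfrak{S}_G(f)\le (k-1)\Delta_G$. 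The careful interface-tracking in this final step is what I expect to be the main technical hurdle.
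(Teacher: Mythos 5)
The paper itself does not prove this statement; it is imported verbatim from Lin, Lippner, Mangoubi and Yau \cite{lin2010nodalmulti}, so there is no in-paper proof to compare against, and your proposal must be judged as a proof of the cited result. Its first half is essentially correct and standard: the Collatz--Wielandt/Perron--Frobenius argument on each strong nodal domain giving $\lambda_1(G[D_i])\ge\lambda_k$, the sign control $\langle f_i,Af_j\rangle\le 0$ for $i\ne j$, and the conclusion via Courant--Fischer. (One small repair: you only produce a single vector in $W=\mathrm{span}(f_1,\dots,f_m)$ with Rayleigh quotient at least $\lambda_k$, whereas the min-max argument needs this for \emph{every} vector of $W$; that stronger statement does follow from $\langle f_i,Af\rangle=\lambda_k\|f_i\|^2$ together with $\langle f_i,Af_j\rangle\le 0$, since the cross terms organize into $\sum_{i<j}(-\langle f_i,Af_j\rangle)(c_i-c_j)^2\ge 0$.) This yields the classical bound $\mathfrak{S}_G(f)\le k+m_{\lambda_k}-1$.

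The genuine gap is the final step, which is where the entire content of the theorem lives and which you leave as a plan rather than an argument. The bound $(k-1)\Delta_G$ is not a refinement of $k+m_{\lambda_k}-1$ reachable by perturbing the Courant test space: the two bounds are incomparable (for $k=2$ and a simple $\lambda_2$ the classical bound is $2$ while the target is $\Delta_G$), and the sole reason the present paper needs the degree form is that Lemma \ref{lemma:multi tree} uses it to bound the multiplicity itself, so any bound still containing $m_{\lambda_k}$ is circular for the application. Your proposed mechanism --- ``peeling off interface vertices,'' each spawning ``at most $\Delta_G$ new test functions'' --- is never defined: you do not say which vertices are removed, what the new test functions are, why the local inequality $\lambda_1(G[D_i])\ge\lambda_k$ survives the removal, or by what counting the product $(k-1)\Delta_G$ emerges. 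As written, nothing in the proposal eliminates the multiplicity term, so the stated theorem is not proved; the actual proof in \cite{lin2010nodalmulti} rests on a different combinatorial accounting in which the maximum degree enters structurally rather than as a perturbation of the Courant--Fischer argument.
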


  \begin{proof}[Proof of Lemma \ref{lemma:multi tree}]
  
We label the vertices by $v_1,v_2,\ldots,v_{n}$ and write $m:=m_{\lambda_k(G)}$ for simplicity. Without loss of generality, we assume that there exists a base $\{f_i\}_{i=1}^m$ of the eigenspace of $A_G$ corresponding to $\lambda_k(G)$ satisfying the following property: For each $1\leq i\leq m$, we have that $f_i(v_i)=1$ and $f_i(v_j)=0$ for any $j\in\{1,\ldots,m\}\setminus\{i\}$.
 
Let $T$ be a spanning tree of $G$. We denote by $d_T$ the combinatorial distance of $T$.
Without loss of generality, we further assume  that for any $1\leq i\leq m$, there does not exist a vertex $v$ such that  $f_i(v)\neq 0$ and $d_T(v,v_1)< d_T(v_i,v_1)$. Indeed, if there exists a vertex $v$ such that $f_i(v)\neq 0$ and $d_T(v,v_1)< d_T(v_i,v_1)$, we exchange the labels of $v$ and $v_i$ and re-normalized the function $f_i$.

  \iffalse
     Let $T$ be a spanning tree of $G$. Assume that $m_{\lambda_k(G)}=m$ and $\mathrm{diam}(T)=r$. We take $\{f_i\}_{i=1}^m$ to be a base of the eigenspace of $A_G$ corresponding to $\lambda_k(G)$. Without loss of generality, we assume that for any $1\leq j\leq m$, $f_i(v_j)=0$ when $j\neq i$, and $f_i(v_i)=1$.  Moreover, we assume that for any $1\leq i\leq m$, there does not exist a vertex $v$ such that  $f_i(v)\neq 0$ and $d_T(v,v_1)< d_T(v_i,1)$. %We can make this assumption because if there is a vertex $i_0$ that satisfies $d(i,1)< d(i_0,1)$ and $f_i(i_0)\neq 0$. Without loss of generality, we assume $i_0=m+1$.
     \fi

Pick $v_1$ as the root of the $T$. Denote $r:=\mathrm{diam}(T)$. For any vertex $v\in T$, we use  $v^F$ to denote the father of $v$ in $T$.  We define a sequence of functions $\{g_k\}_{k=0}^r$ iteratively. Set $g_0=f_0$. For $1\leq k\leq r$, we define $$g_k=g_{k-1}+\sum_{\substack{1\leq j\leq m\\d_T(v_j,v_1)=k}}c_{k,j}f_j,$$
where $c_{k,j}=1$ when $g_{k-1}(v_j^F)\leq 0$, and $c_{k,j}=-1$ when $g_{k-1}(v_j^F)> 0$. It is worth noting that by our definition and assumption on $\{f_i\}_{i=1}^r$, we have $g_{k_0}(v)=g_k(v)$ for $1\leq k< k_0\leq r$ and any vertex $v\in V_0$ with $d_T(v,v_1)\leq k$. 

Notice that $g_{r}(v_j)g_{r}(v_j^F)\leq 0$ holds for any $j=2,\ldots,m$. Therefore, the function $g_r$ has at least $m$ strong nodal domains on $T$. This implies  $g_r$ has at least $m-\ell_G$ strong nodal domains on $G$. Since $g_r$ is an eigenfunction of $A_G$ corresponding to $\lambda_k(G)$, we have $m-\ell_G\leq (k-1)\Delta_G$ by Theorem \ref{thm:nodal upper}. This concludes the proof.
  \end{proof}

 % \begin{theorem}[\cite{Hoffman72Onlimitpoints}]\label{thm:not tree}
 % If $G$ is a connected graph which is neither a 
% tree nor a circuit, then we have $\lambda_{1}(G)>\sqrt{2+\sqrt{5}}$.
% \end{theorem}

Let $G=(V,E)$ be a graph. Given two vertex sets $V_1,V_2\subset V$, we say $V_1,V_2$ are edge-disjoint if $\{\{i,j\}\in E:i\in V_1,j\in V_2\}=\emptyset $. To complete the proof of Theorem \ref{thm:multi}, we need the following lemma.
\begin{lemma}[{\cite[Lemma 2.2]{balla2024equiangular-exponential-regime}}]\label{lemma:edge disjoint}
    Let $G=(V,E)$ be a connected graph and $V_1,V_2\subset V$. If $V_1$ and $V_2$ are edge-disjoint, we have $\lambda_1(G[V_1])<\lambda_2(G)$ or $\lambda_1(G[V_2])<\lambda_2(G)$ or $\lambda_1(G[V_1])=\lambda_1(G[V_2])=\lambda_2(G).$
\end{lemma}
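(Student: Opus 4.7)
The plan is to invoke the Courant--Fischer min-max characterization of $\lambda_2(G)$ using a two-dimensional test subspace constructed from the Perron eigenvectors of $G[V_1]$ and $G[V_2]$, combined with the Perron eigenvector of the ambient graph $G$.

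First, I would let $f_i$ be a Perron eigenvector realizing $\lambda_1(G[V_i])$, normalized so that $\|f_i\|=1$ and extended to $V$ by zero; if $G[V_i]$ is disconnected, take $f_i$ supported on a single component attaining the maximum. The trivial cases in which some $V_i$ is empty or $G[V_i]$ is edgeless fall immediately into one of the first two alternatives, so one may assume both $f_i$ are nontrivial Perron vectors. A vertex in $V_1 \cap V_2$ must be isolated in each induced subgraph (an incident edge would contradict edge-disjointness), so $f_i$ vanishes on $V_1 \cap V_2$; hence $f_1$ and $f_2$ have disjoint supports, and combined with edge-disjointness this yields the two identities $\langle f_1, f_2 \rangle = 0$ and $f_1^{T} A_G f_2 = 0$.

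Next, let $\phi_1$ be the Perron eigenvector of the connected graph $G$, which is strictly positive by Perron--Frobenius, so that $c_i := \langle f_i, \phi_1 \rangle > 0$. I would form the test vector $v := c_2 f_1 - c_1 f_2 \neq 0$, which is orthogonal to $\phi_1$ by construction. Using the two orthogonality identities above,
\[
v^{T} A_G v = c_2^2\, \lambda_1(G[V_1]) + c_1^2\, \lambda_1(G[V_2]), \qquad \|v\|^2 = c_1^2 + c_2^2.
\]
The Courant--Fischer theorem then yields $v^{T} A_G v \leq \lambda_2(G) \|v\|^2$, which rearranges to the key inequality
\[
c_2^2 \bigl(\lambda_1(G[V_1]) - \lambda_2(G)\bigr) + c_1^2 \bigl(\lambda_1(G[V_2]) - \lambda_2(G)\bigr) \leq 0.
\]

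To conclude, I would split into cases. If at least one of the two values $\lambda_1(G[V_i])$ is strictly less than $\lambda_2(G)$, the first or second alternative of the lemma holds. Otherwise both bracketed terms above are non-negative, and since the weights $c_i^2$ are strictly positive, the only way the sum can be non-positive is if each term vanishes, forcing $\lambda_1(G[V_1]) = \lambda_1(G[V_2]) = \lambda_2(G)$, the third alternative. The main subtlety in carrying out this plan is verifying the two orthogonality identities when $V_1 \cap V_2 \neq \emptyset$, which is resolved cleanly by the observation that shared vertices must be isolated in both induced subgraphs; beyond this, the argument is a compact Courant--Fischer computation.
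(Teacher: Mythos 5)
Your core argument is the standard proof of this statement (the paper itself only quotes it from Balla--Buci\'c and gives no proof): take Perron vectors $f_1,f_2$ of the maximizing components of $G[V_1],G[V_2]$, build a test vector orthogonal to the strictly positive Perron vector $\phi_1$ of the connected graph $G$, and apply Courant--Fischer; in the main case where each $G[V_i]$ contains an edge, every step is correct --- the maximizing component then has spectral radius at least $1$, so it avoids the vertices of $V_1\cap V_2$ (which are indeed isolated in both induced subgraphs), the supports of $f_1,f_2$ are disjoint and non-adjacent, $v=c_2f_1-c_1f_2\neq 0$, and the trichotomy follows from $c_1,c_2>0$ exactly as you say.

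The gap is the sentence dismissing the degenerate cases. If $G[V_i]$ is edgeless then $\lambda_1(G[V_i])=0$, and this does \emph{not} ``fall immediately into one of the first two alternatives,'' because $\lambda_2(G)$ need not be positive: $\lambda_2(G)=0$ for complete multipartite graphs and $\lambda_2(G)<0$ for complete graphs. Worse, when overlap is allowed this case cannot be closed at all, since the statement as literally written fails there: for $G=K_3$ and $V_1=V_2=\{v\}$ the sets are edge-disjoint in the paper's sense, yet $\lambda_1(G[V_1])=\lambda_1(G[V_2])=0>-1=\lambda_2(G)$; this is precisely where your construction degenerates, because $f_1=f_2=\mathbf{1}_v$ and $c_1=c_2$ force $v=c_2f_1-c_1f_2=0$. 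The clean repair is to assume $V_1\cap V_2=\emptyset$, which is the intended reading and the only way the lemma is used in this paper (with $V_1=B^G_{n_\lambda}(\mathcal{C})$ and $V_2=V\setminus B^G_{n_\lambda+2}(\mathcal{C})$). Under disjointness no case split is needed at all: if $G[V_1]$ is edgeless simply take $f_1=\mathbf{1}_u$ for any $u\in V_1$; then $f_1^{T}A_Gf_1=0=\lambda_1(G[V_1])$, the supports of $f_1$ and $f_2$ are still disjoint and non-adjacent, and your computation runs verbatim (equivalently, two non-adjacent vertices $u\in V_1$, $w\in V_2$ give $\lambda_2(G)\geq 0$ by interlacing, landing in the first or third alternative). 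As written, though, the dismissal rests on a false claim and hides the only point where the argument can break.
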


\begin{proof}[Proof of Theorem \ref{thm:multi}]
Since $m_{\lambda_2(G)}\leq |V|$, it remains to consider $|V|\geq 2n_{\lambda}\Delta^{n_{\lambda}+3}\left(1+\Delta+\Delta^2\right)$. If $G$ is a tree, we have $m_{\lambda_{2}(G)}\leq \Delta_{G}<2n_{\lambda}\Delta^{n_{\lambda}+3}\left(1+\Delta+\Delta^2\right)$ according to Lemma \ref{lemma:multi tree}. Next, we assume that $G$ is not a tree.

Case $1$: $\mathfrak{g}(G)\leq2n_{\lambda}$. We take a cycle $\mathcal{C}$ in $G$ such that the length of $\mathcal{C}$ is $\mathfrak{g}(G)$. Since $|V|\geq 2n_{\lambda}\Delta^{n_\lambda}$, there must exist two vertices $x\in \mathcal{C}$ and $y\in V$ such that $\mathrm{dis}(y,\mathcal{C})=\mathrm{dis}(x,y)=n_{\lambda}$. This implies that $\mathcal{H}(\mathfrak{g}(G),n_{\lambda})$ is a subgraph of $G[B^G_{n_{\lambda}}(\mathcal{C})]$. We have $$\lambda_1\left( G[B^G_{n_{\lambda}}(\mathcal{C})]\right)\geq \lambda_1\left( \mathcal{H}(\mathfrak{g}(G),n_{\lambda})\right)\geq \lambda_1(\mathcal{H}(2n_\lambda+1),n_{\lambda})\geq \lambda_1(\mathcal{T}(n_\lambda,n_\lambda,n_\lambda))> \lambda,$$
where the first inequality follows from Lemma \ref{lemma:mono1}, the second inequality follows from Lemma \ref{lemma:mono2} and the third inequality is derived from the fact that $\mathcal{T}(n_\lambda,n_\lambda,n_\lambda)$ is a subgraph of $\mathcal{H}(2n_{\lambda}+1,n_\lambda)$ and Lemma \ref{lemma:mono1}. By Lemma \ref{lemma:edge disjoint}, we have \[\lambda_1\left(G[V\setminus B^G_{n_{\lambda}+2}(\mathcal{C})]\right)<\lambda.\]
Denote $\{G_i\}_{i=1}^s$ as the connected components of $G[V\setminus B^G_{n_{\lambda}+2}(\mathcal{C})]$. Fix an $i\in\{1,\ldots,s\}$. If $\ell_{G_i}\geq 2$, then $G_i$ must contain a graph of the class defined in Definition \ref{def:4graph} $(a)$, $(b)$ or $(c)$. By Proposition \ref{pro:two-cycle} and Lemma \ref{lemma:mono1}, we have $$\lambda_1\left(G[V\setminus B^G_{n_{\lambda}+2}(\mathcal{C})]\right)\geq \lambda_1\left(G_i\right)\geq \frac{3}{\sqrt{2}}>\lambda.$$ Contradiction. This implies $\ell_{G_i}\leq 1$ for any $1\leq i \leq s$. Since $\Delta_G\leq \Delta$, we have $$s\leq \Delta \left|B^G_{n_{\lambda}+2}(\mathcal{C})\right|\leq 2n_{\lambda}\Delta^{n_{\lambda}+4}.$$ By Lemma \ref{lemma:multi tree} and Theorem \ref{thm:interlacing}, we have $$m_{\lambda_2(G)}\leq 2(\Delta+1)n_{\lambda}\Delta^{n_{\lambda}+4}+2n_{\lambda}\Delta^{n_{\lambda}+3}=2n_{\lambda}\Delta^{n_{\lambda}+3}\left(1+\Delta+\Delta^2\right).$$   

Case $2$: $\mathfrak{g}(G)>2n_{\lambda}$. If $\ell_G\leq 1$, then we have $m_{\lambda_2(G)}\leq \Delta+1<2n_{\lambda}\Delta^{n_{\lambda}+3}\left(1+\Delta+\Delta^2\right)$. 

If $\ell_G\geq 2$, then $G$ must contain a graph of the class defined in Definition \ref{def:4graph} $(a)$, $(b)$ or $(c)$. Since $\mathfrak{g}(G)>2n_{\lambda}$, we have that $\mathcal{T}(n_\lambda,n_\lambda,n_\lambda)$ is a subgraph of $G$. Let $x\in V$ be a vertex such that $\mathcal{T}(n_\lambda,n_\lambda,n_\lambda)$ is a subgraph of $G[B^G_{n_\lambda}(x)]$. This implies $\lambda_1\left(G[ B^G_{n_\lambda}(x)]\right)>\lambda$. By Lemma \ref{lemma:edge disjoint}, we have $\lambda_1\left(G[V\setminus B^G_{n_\lambda+2}(x)]\right)<\lambda$. Denote the connected components of $G[V\setminus B^G_{n_{\lambda}+2}(x)]$ by $\{G_i\}_{i=1}^t$. Similarly to Case $1$, we have $t\leq \Delta^{n_{\lambda}+4}$ and $\ell_{G_i}<2$ for any $1\leq i\leq t$. By Lemma \ref{lemma:multi tree} and Theorem \ref{thm:interlacing}, we have $$m_{\lambda_2(G)}\leq \Delta^{n_{\lambda+3}}\left(1+\Delta+\Delta^2\right).$$

Combining the conclusions from Case $1$ and Case $2$, we derive that $$m_{\lambda_2(G)}\leq 2n_{\lambda}\Delta^{n_{\lambda}+3}\left(1+\Delta+\Delta^2\right).$$This concludes the proof.
\end{proof}

\section{Proof of Theorem \ref{thm:main}}\label{sec:equiangular}
Following the proof of Theorem \ref{thm:k-lambad} due to \cite{Jiang-Tidor-Yao-Zhang-Zhao-21}, we can directly derive Theorem \ref{thm:main} by applying Theorem \ref{thm:multi}. For the convenience of the reader, we present the details here.

We first prepare the following two results. The first is a combination of \cite[Lemma 3.1]{Jiang-Tidor-Yao-Zhang-Zhao-21} and \cite[Lemma 10]{balla2024equiangular-exponential-regime}.
\begin{lemma}\label{lemma:equi-and-graph}
    There exist $N$ equiangular lines in $\mathbb{R}^d$ with common angle $\arccos\,\alpha$ if and only if there exists an $N$-vertex graph $G$ with  $\Delta_G\leq \frac{6}{\alpha^4}$ such that the matrix $\lambda I-A_G+\frac{1}{2}J$ is positive semidefinite and has rank at most $d$, where $\lambda=\frac{1-\alpha}{2\alpha}$ and $J$ is the $N\times N$ all-$1$ matrix.
\end{lemma}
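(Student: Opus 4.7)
The plan is to split the proof into two parts: first the algebraic correspondence between equiangular line configurations and positive semidefinite matrices of the prescribed shape, which is routine, and then the quantitative bound $\Delta_G \leq 6/\alpha^4$, which requires a sign-choice (Seidel switching) argument.

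For the algebraic correspondence, I would begin in the forward direction. Given $N$ equiangular lines in $\mathbb{R}^d$ with common angle $\arccos\alpha$, choose a unit direction vector $v_i$ for each line. The Gram matrix $M = (\langle v_i, v_j\rangle)_{i,j}$ satisfies $M_{ii} = 1$ and $M_{ij} \in \{\alpha, -\alpha\}$ for $i \neq j$, and is positive semidefinite of rank equal to $\dim\mathrm{span}\{v_1,\ldots,v_N\} \leq d$. Define the graph $G$ on $\{1,\ldots,N\}$ by putting $\{i,j\} \in E(G)$ exactly when $\langle v_i, v_j\rangle = -\alpha$. Checking the three entry types (diagonal, edge, non-edge) one obtains
\[
M \;=\; (1-\alpha)\,I \;-\; 2\alpha\,A_G \;+\; \alpha J \;=\; 2\alpha\bigl(\lambda I - A_G + \tfrac{1}{2} J\bigr),
\]
so $\lambda I - A_G + \tfrac{1}{2} J$ is positive semidefinite of rank at most $d$. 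Conversely, any matrix of that form is $(2\alpha)^{-1}$ times the Gram matrix of $N$ unit vectors in $\mathbb{R}^d$ whose pairwise inner products are $\pm\alpha$, which determines $N$ equiangular lines with common angle $\arccos\alpha$.

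The substantive input is the degree bound. Each line admits two unit direction vectors, and replacing $v_i$ by $-v_i$ corresponds to Seidel switching at vertex $i$ in $G$; this conjugates $A_G$ and $\lambda I - A_G + \tfrac12 J$ by a diagonal $\pm 1$ matrix and hence preserves both positive semidefiniteness and rank. The switching class of $G$ therefore contains many candidate graphs, and the goal is to pick one whose maximum degree is at most $6/\alpha^4$. Following Balla--Buci\'c, the idea is to derive a local estimate: for a putative high-degree vertex $i$ with $D$ neighbors, examining the norm of an averaged combination such as $v_i + \frac{1}{D}\sum_{j \sim i} v_j$ together with the equiangular constraint among the $v_j$'s yields a quadratic inequality in $D$ and $\alpha$, and a second cosine-type estimate applied to the neighbor subsystem pushes the bound down to $D = O(1/\alpha^4)$, the fourth power reflecting the two iterations.

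The main obstacle will be orchestrating the switches so that \emph{every} vertex degree is controlled simultaneously, since switching at vertex $i$ also alters the degrees of all other vertices. The standard remedy is to choose a representative of the switching class that minimizes a suitable potential, such as the total number of edges or $\sum_i d_G(i)^2$, and to argue that if some vertex in the minimizer had degree exceeding $6/\alpha^4$ then switching it would contradict the local estimate above. Once the degree bound is available, combining it with the Gram-matrix identity yields both directions of the lemma.
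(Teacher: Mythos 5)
Your Gram-matrix half is fine and is exactly the standard argument: with unit representatives $v_i$ and $G$ the ``negative'' graph, one has $M=(1-\alpha)I-2\alpha A_G+\alpha J=2\alpha\left(\lambda I-A_G+\tfrac12 J\right)$, and conversely any positive semidefinite matrix of this shape with rank at most $d$ is $2\alpha$ times the Gram matrix of unit vectors in $\mathbb{R}^d$ with pairwise inner products $\pm\alpha$, hence spans $N$ distinct equiangular lines (the degree bound is not needed in this direction). Note, for calibration, that the paper does not prove the lemma at all: it is stated as a combination of \cite[Lemma 3.1]{Jiang-Tidor-Yao-Zhang-Zhao-21} (the algebraic equivalence you reproved) and \cite[Lemma 10]{balla2024equiangular-exponential-regime} (the assertion that the representatives can be switched so that $\Delta_G\leq 6/\alpha^4$).

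The genuine gap is in that second ingredient, which is the only substantive content of the lemma. Your sketch asserts that minimizing a potential such as $e(G)$ or $\sum_i d_G(i)^2$ over the switching class, together with a norm estimate for $v_i+\frac1D\sum_{j\sim i}v_j$, forces $\Delta_G\leq 6/\alpha^4$, but no inequality is actually derived, and the mechanism as described does not obviously close. Minimality of the edge count under switching a single vertex $i$ only says that $i$ has at most as many negative as positive neighbours, i.e.\ $d_G(i)\leq (N-1)/2$ --- a global bound with no dependence on $\alpha$ --- and the norm computation for $\sum_{j\sim i}v_j+t v_i$ only bounds the number of negative pairs \emph{inside} the neighbourhood (roughly $m\lesssim (1-\alpha)D^2/4$), which does not by itself contradict minimality when $D>6/\alpha^4$. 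Turning a local edge-density statement about neighbourhoods into a bound on $D$ itself requires an additional idea (this is precisely what Balla--Buci\'c's switching lemma supplies), and your appeal to ``a second cosine-type estimate'' giving the exponent $4$ is a guess rather than an argument; in particular the specific constant $6/\alpha^4$ cannot be recovered from what you wrote. As it stands, the forward implication of the lemma --- producing a graph with $\Delta_G\leq 6/\alpha^4$ --- is unproved; either carry out the Balla--Buci\'c argument in detail or cite it, as the paper does.
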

\begin{pro}[{\cite[Proposition 3.2]{Jiang-Tidor-Yao-Zhang-Zhao-21}}]\label{pro:lower bound}
    Let $\alpha\in (0,1)$ and $d\in Z^+$. Define $\lambda=\frac{1-\alpha}{2\alpha}$. We have $N_{\alpha}(d)\geq d$. Moreover, if $k(\lambda)<\infty$, we have  $N_{\alpha}(d)\geq \left\lfloor \frac{(d-1)\kappa}{\kappa-1}\right\rfloor$.
\end{pro}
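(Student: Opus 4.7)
The plan is to invoke the ``if'' direction of Lemma~\ref{lemma:equi-and-graph}, which reduces the construction of $N$ equiangular lines in $\mathbb{R}^d$ with common angle $\arccos\alpha$ to exhibiting an $N$-vertex graph $G$ for which the matrix $M_G := \lambda I - A_G + \tfrac{1}{2}J$ is positive semidefinite with $\mathrm{rank}(M_G)\le d$. The strategy is then to choose $G$ so as to make $N$ as large as possible.

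For the bound $N_{\alpha}(d)\ge d$, I would take $G$ to be the empty graph on $d$ vertices, so $A_G=0$ and $M_G=\lambda I+\tfrac{1}{2}J$. Being a sum of two positive semidefinite matrices, $M_G$ is positive semidefinite, and its eigenvalues are $\lambda+d/2$ (on the all-ones direction) together with $\lambda$ of multiplicity $d-1$ on the orthogonal complement, all strictly positive. Hence $\mathrm{rank}(M_G)=d$, which produces $d$ equiangular lines in $\mathbb{R}^d$.

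For the stronger bound when $\kappa:=\kappa(\lambda)<\infty$, let $H$ be a connected graph on $\kappa$ vertices with $\lambda_1(H)=\lambda$, whose existence follows from the definition of $\kappa$ (after passing to a connected component if necessary). For nonnegative integers $m$ and $s$ to be chosen, I would let $G$ be the disjoint union of $m$ copies of $H$ together with $s$ isolated vertices, so $N=m\kappa+s$. Since $\lambda_1(G)=\lambda_1(H)=\lambda$, the matrix $\lambda I-A_G$ is positive semidefinite, and by the Perron--Frobenius theorem applied to each connected copy of $H$ its kernel has dimension exactly $m$ (the isolated vertices contribute eigenvalue $0$ of $A_G$, hence eigenvalue $\lambda\ne 0$ of $\lambda I-A_G$). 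Subadditivity of rank then yields
\[
\mathrm{rank}(M_G)\;\le\;\mathrm{rank}(\lambda I-A_G)+\mathrm{rank}\bigl(\tfrac{1}{2}J\bigr)\;=\;(N-m)+1\;=\;m(\kappa-1)+s+1.
\]
Choosing $m=\bigl\lfloor(d-1)/(\kappa-1)\bigr\rfloor$ and $s=(d-1)-m(\kappa-1)\ge 0$ forces $\mathrm{rank}(M_G)\le d$, while
\[
N\;=\;m\kappa+s\;=\;(d-1)+m\;=\;(d-1)+\bigl\lfloor\tfrac{d-1}{\kappa-1}\bigr\rfloor\;=\;\bigl\lfloor\tfrac{(d-1)\kappa}{\kappa-1}\bigr\rfloor,
\]
as claimed.

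The only delicate point is the rank calculation above, and only a one-sided estimate is needed, for which the crude bound $\mathrm{rank}(A+B)\le\mathrm{rank}(A)+\mathrm{rank}(B)$ suffices. A second minor issue is that the $\kappa$-realizing graph $H$ need not obey the degree clause $\Delta_G\le 6/\alpha^4$ appearing in Lemma~\ref{lemma:equi-and-graph}; however, for the ``if'' direction used here, the equiangular lines are produced directly from a Gram factorization $M_G=B^{\top}B$ (the normalized columns of $B$ are the direction vectors of the lines, with pairwise inner products $\pm 1/(2\lambda+1)=\pm\alpha$), and this construction does not require any degree hypothesis, so this is not a genuine obstacle.
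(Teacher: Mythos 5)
Your proof is correct, and it is essentially the standard construction behind the cited result: the paper itself imports Proposition \ref{pro:lower bound} from \cite{Jiang-Tidor-Yao-Zhang-Zhao-21} without reproving it, and the argument there is exactly of this form (disjoint copies of a minimal $\kappa$-vertex graph with spectral radius $\lambda$ plus isolated vertices, fed into the graph--lines correspondence, with the empty graph giving $N_\alpha(d)\ge d$). One small simplification: your final caveat about the degree clause is unnecessary, since any graph $H$ with $\lambda_1(H)=\lambda$ automatically satisfies $\Delta_H\le \lambda_1(H)^2=\lambda^2<6(2\lambda+1)^4=6/\alpha^4$ (a vertex of degree $\Delta$ forces $\lambda_1\ge\sqrt{\Delta}$), so your graph $G$ satisfies the hypothesis of Lemma \ref{lemma:equi-and-graph} as stated and no appeal to the internal Gram-factorization argument is needed.
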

\begin{proof}[Proof of Theorem \ref{thm:main}]
    (a) By Proposition \ref{pro:lower bound}, it is sufficient to prove $N_{\alpha}(d)\leq \left\lfloor \frac{(d-1)\kappa}{\kappa-1}\right\rfloor$.
    
    We recall that $n_{\lambda_0}$ is the smallest integer such that $\mathcal{T}\left(n_{\lambda_0},n_{\lambda_0},n_{\lambda_0}\right)>\lambda_0$. 
     Assume that there are $N$ equiangular lines in $\mathbb{R}^d$ with common
angle $\arccos \alpha$. By Lemma \ref{lemma:equi-and-graph}, there exists an  $N$-vertex graph $G$ with $\Delta_G\leq 6(2\lambda_0+1)^4:=\Delta_{\lambda_0}$ such that the matrix $\lambda I-A_{G}+\frac{1}{2}J$ is positive semidefinite and has rank at most $d$.
Let us assume 
\begin{equation}\label{eq:d_assumption}
d\geq \left[2+2n_{\lambda_0}\Delta_{\lambda_0}^{n_{\lambda_0}+3}\left(1+\Delta_{\lambda_0}+\Delta_{\lambda_0}^2\right)\right](\kappa-1)+\kappa.
\end{equation}

    We denote the connected components of $G$ by $G_1,G_2,\ldots,G_t$ and assume $\lambda_1(G_1)=\lambda_1(G)$.

    Case $1$: $\lambda$ is not an eigenvalue of $A_G$.
    
    We have $$d\geq \mathrm{rank}(\lambda I-A_G+\frac{1}{2}J)\geq N-1,$$
    since the matrix $\lambda I-A_G$ has full rank and $J$ has rank $1$. This implies $$N\leq d+1<  \left\lfloor \frac{(d-1)\kappa}{\kappa-1}\right\rfloor.$$ Contradiction.

    Case $2$: $\lambda_1(G)=\lambda$. 
    
    Assume $\lambda_1(G_1)=\lambda_1(G_2)=\cdots\lambda_1(G_j)=\lambda$.  By Perron-Frobenius theorem, we obtain $$\mathrm{dim\,ker}(\lambda I-A_G)=j.$$ Since $\lambda I-A_G$ and $J$ are positive semidefinite, we have $$\mathrm{ker}(\lambda I-A_G+\frac{1}{2}J)=\ker(\lambda I-A_G)\cap\ker(J)$$ 
 By Perron-Frobenius theorem, we have that there must exist a nonnegative vector $f\in \ker(\lambda I-A_G)$. We have $f\notin \ker(J)$ by direct computation. This implies 
 $$\dim\left(\ker(\lambda I-A_G)\cap\ker(J)\right)\leq \dim\ker(\lambda I-A_G)-1.$$ 
By rank-nullity theorem, we have  \begin{equation}\label{eq:d-1}
    \mathrm{rank}(\lambda I-A_G)\leq \mathrm{rank}(\lambda I-A_G+J)-1\leq d-1.\end{equation}
Thus, $$N=\ker(\lambda I-A_G)+\mathrm{rank}(\lambda I-A_G)\leq d-1+j.$$

By definition of $\kappa$, we have $|V(G_i)|\geq \kappa$, where $|V(G_i)|$ is the number of vertex of $G_i$. This implies $\mathrm{rank}(\lambda I-A_G)\geq \kappa j-j.$ Combining this inequality with (\ref{eq:d-1}), we have $j\leq \frac{d-1}{\kappa-1}$. Then we have $$N\leq d-1-j\leq \frac{(d-1)\kappa}{\kappa-1}.$$

Case $3$: $\lambda_1(G)>\lambda$ with $\lambda$ being an eigenvalue of $A_G$. We have $\lambda_2(G)\leq \lambda$ because $\lambda I-A_G+\frac{1}{2}J$ is positive semidefinite and $J$ has rank $1$. We first prove $$\mathrm{dim\,ker}(\lambda I-A_G)=\mathrm{dim\,ker}(\lambda I-A_{G_1}).$$

Assume there exists $j>1$ such that $\lambda_1(G_j)=\lambda$. Let $f_1$ be an eigenfunction of $A_G$ corresponding to $\lambda_1(G)$ and $f_2$ be an eigenfunction of $A_G$ corresponding to $\lambda$. By the Perron-Frobenius theorem, we can assume that $f_1$ is positive on the vertices of $G_1$ and $0$ on the other vertices, and $f_2$ is positive on the vertices of $G_j$ and $0$ on the other vertices. Moreover, we assume $(f_1)^Tf_1=(f_2)^Tf_2=1$.

Take a constant $\gamma$ such that $1^T(f_1-\gamma f_2)=0$. Because $\lambda I-A_G+\frac{1}{2}J$ is positive semidefinite, we compute $$0\leq (f_1-\gamma f_2)^T(\lambda I-A_G+\frac{1}{2}J)(f_1-\gamma f_2)\leq \lambda-\lambda_1(G_1)<0,$$
contradiction.
This implies $\mathrm{dim\,ker}(\lambda I-A_G)=\mathrm{dim\,ker}(\lambda I-A_{G_1}).$ By Theorem \ref{thm:multi}, we have $$\mathrm{dim\,ker}(\lambda I-A_{G_1})\leq 2n_{\lambda}\Delta^{n_{\lambda}+3}\left(1+\Delta+\Delta^2\right).$$
 Since $\mathrm{rank}(\lambda I-A_G)\leq \mathrm{rank}(\lambda I-A_G+\frac{1}{2}J)+1\leq d+1$, we compute \begin{equation*}
\begin{aligned}
    N&=\mathrm{rank}(\lambda I-A_G)+\mathrm{dim \,ker}(\lambda I-A_G)=\mathrm{rank}(\lambda I-A_G)+\mathrm{dim \,ker}(\lambda I-A_{G_1})\\
    &\leq d+1+2n_{\lambda}\Delta^{n_{\lambda}+3}\left(1+\Delta+\Delta^2\right)\\
    &<\left\lfloor\frac{(d-1)\kappa}{\kappa-1}\right\rfloor,
    \end{aligned}
\end{equation*}
where the last inequality follows from our assumption \eqref{eq:d_assumption} on $d$. Contradiction.

Combining the results in Case $1$, Case $2$ and Case $3$, we have  $N\leq\left\lfloor\frac{(d-1)\kappa}{\kappa-1}\right\rfloor$ holds for all $d$ satisfying \eqref{eq:d_assumption}. This completes the proof of Theorem \ref{thm:main} (a).

(b) Since $\kappa=+\infty$, we have $\lambda_1(G)\neq \lambda$, that is, the above Case $2$ is not feasible.
Similarly to Case $1$ and Case $3$ above, we have that 
\begin{equation*}
    N_{\alpha}(d)\leq d+1+2n_{\lambda}\Delta^{n_{\lambda}+3}\left(1+\Delta+\Delta^2\right).
\end{equation*}
By Proposition \ref{pro:lower bound}, we have $N_{\alpha}(d)=d+O_{\lambda_0}(1)$.
\end{proof}

\section*{Acknowledgement}

This work is supported by the National Key R and D Program of China 2023YFA1010200, the National Natural Science Foundation of China (No. 12431004). We are very grateful to Yanlong Ding for helpful discussions.

\bibliographystyle{plain}
\bibliography{references}
\end{document}